\numberwithin{equation}{section}
\def\mc{\mathcal}
\def\n{\nabla}
\theoremstyle{plain}
\newtheorem{thm}{Theorem}[section]
\newtheorem{cor}[thm]{Corollary}
\theoremstyle{definition}
\newtheorem{rem}[thm]{Remark}
\newtheorem{con}[thm]{Conjecture}
\theoremstyle{definition}
\newtheorem{defn}[thm]{Definition}
\newcommand{\comment}[1]{}
\begin{document}
\begin{CJK}{UTF8}{gbsn}

\title{Positivity of simplicial volume for closed nonpositively curved four-manifolds with nonzero Euler characteristic}
\author{Inkang Kim}
\address{Inkang Kim: School of Mathematics, KIAS, Heogiro 85, Dongdaemun-gu Seoul, 02455, Republic of Korea}
\email{inkang@kias.re.kr}

\author{Xueyuan Wan}
\address{Xueyuan Wan: Mathematical Science Research Center, Chongqing University of
Technology, Chongqing 400054, China}
\email{xwan@cqut.edu.cn}

\date{\today}

\begin{abstract}

In this paper, by employing the Gauss-Bonnet theorem for Riemannian simplices due to Allendoerfer and Weil, we show that if a closed nonpositively curved $4$-manifold has nonzero Euler characteristic, then its simplicial volume is necessarily positive.
This result partially resolves conjectures posed by Connell-Ruan-Wang and Gromov concerning the relationship between the simplicial volume and the Euler characteristic for four-dimensional manifolds. As an application, we show that if a closed nonpositively curved $4$-manifold has negative Ricci curvature, then its simplicial volume is positive, thereby confirming in dimension four another conjecture of Gromov on the positivity of simplicial volume.

\end{abstract}

 \subjclass[2020]{53C23, 57K40, 57R20}  
 
 \keywords{Simplicial volume, Euler characteristics, closed four-manifolds, nonpositively curved manifolds, Gauss-Bonnet theorem}

%\tableofcontents
\maketitle

\section{Introduction}

Let $M$ be a closed, connected, oriented $n$-dimensional manifold. The simplicial volume of $M$, first introduced by Gromov \cite{Gromov_1982}, is defined as
\[
\|M\| = \inf \left\{\sum_i |a_i| : \left[\sum_i a_i \sigma_i\right] = [M] \in H_n(M, \mathbb{R})\right\},
\]
where the infimum is taken over all real singular cycles representing the fundamental homology class of $M$.

The positivity of simplicial volume has been extensively studied. For hyperbolic manifolds, the simplicial volume is proportional to their hyperbolic volume; see Gromov \cite[Page 11]{Gromov_1982} and Thurston \cite[Theorem 6.2]{Thurston}. For closed oriented manifolds of strictly negative curvature, the simplicial volume is also known to be positive; see Gromov \cite[Page 10]{Gromov_1982} and Inoue-Yano \cite[Theorem 1]{Inoue_Yano_1982}. For closed locally symmetric spaces of non-compact type, positivity is also known:
see Lafont and Schmidt \cite[Theorem 1.1]{Lafont_Schmidt_2006}, and Bucher-Karlsson
\cite[Theorem 1]{BK07} for the case of spaces covered by $\mathrm{SL}_3(\mathbb{R})/\mathrm{SO}(3)$. More generally, Gromov proposed the following conjecture (see \cite{MR670930} or also \cite[Page 11]{Gromov_1982}):

\begin{con}\label{con1}
If $M$ admits a Riemannian metric with nonpositive sectional curvature and negative definite Ricci curvature, then $\|M\| > 0$.
\end{con}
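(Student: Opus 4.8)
\medskip

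\noindent\textbf{Proof proposal.} In full generality Conjecture~\ref{con1} is open; I sketch a proof in dimension four, which is what the paper establishes. The plan is to reduce the statement to the purely topological assertion that a closed nonpositively curved $4$-manifold $M$ with $\chi(M)\ne 0$ has $\|M\|>0$, and to prove that assertion via the Gauss--Bonnet theorem for Riemannian simplices. For the reduction, the Chern--Gauss--Bonnet theorem in dimension four writes $\chi(M)=\int_M\mathrm{Pf}$, where $\mathrm{Pf}\in\Omega^4(M)$ is the Pfaffian of the curvature form, and for a curvature tensor with nonpositive sectional curvature the integrand $\mathrm{Pf}$ is pointwise nonnegative; hence $\chi(M)\ge 0$, with equality only if $\mathrm{Pf}\equiv 0$. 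One then shows that identical vanishing of the Gauss--Bonnet integrand on a closed nonpositively curved $4$-manifold forces the universal cover to split off a Euclidean de Rham factor, so that the Ricci curvature vanishes in those directions --- incompatible with negative definite Ricci curvature. Hence $\chi(M)>0$, and it remains to prove the topological assertion.

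\medskip

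\noindent For the latter, take any real singular cycle $z=\sum_i a_i\sigma_i$ representing $[M]\in H_4(M;\mathbb{R})$. Since the universal cover $\widetilde M$ is a Hadamard manifold, apply Thurston's straightening: replace each $\sigma_i$ by the straight simplex --- the iterated geodesic cone --- on its lifted vertices. The resulting cycle is homologous to $z$, still represents $[M]$, has $\ell^1$-norm at most $\sum_i|a_i|$, and consists of geodesic simplices that are embedded upstairs in $\widetilde M$ and Lipschitz, the degenerate ones having lower-dimensional image. Since $\mathrm{Pf}$ is a closed $4$-form with $\langle[\mathrm{Pf}],[M]\rangle=\chi(M)$, integrating it along the straightened cycle gives
\[
\chi(M)=\sum_i a_i\int_{\sigma_i}\mathrm{Pf}.
\]
Thus everything hinges on a uniform estimate: a constant $C=C(M)>0$, independent of the cycle, with $\big|\int_\sigma\mathrm{Pf}\big|\le C$ for every straight geodesic $4$-simplex $\sigma$ --- equivalently, a bounded cocycle representative of the Euler class of $TM$. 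Granting it, $|\chi(M)|\le C\sum_i|a_i|$ for every fundamental cycle, so $\|M\|\ge|\chi(M)|/C>0$.

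\medskip

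\noindent The uniform estimate is where the Allendoerfer--Weil Gauss--Bonnet theorem for Riemannian simplices enters, and establishing it is the step I expect to be the main obstacle. The naive bound $\big|\int_\sigma\mathrm{Pf}\big|\le(\mathrm{const})\cdot\mathrm{vol}(\sigma)$ is useless, since geodesic simplices in a Hadamard manifold have unbounded volume --- which is exactly why Thurston's hyperbolic argument, resting on a uniform volume bound for straight simplices, does not transfer to variable curvature. Instead, applied to a nondegenerate straight simplex $\sigma\subset\widetilde M$, the Allendoerfer--Weil formula gives
\[
\int_\sigma\mathrm{Pf}=\chi(\Delta^4)-\sum_{F\subsetneq\sigma}\int_F\Lambda_F=1-\sum_{F\subsetneq\sigma}\int_F\Lambda_F,
\]
where $F$ ranges over the proper faces of $\sigma$ and each $\Lambda_F$ is an explicit form on $F$ assembled from the exterior angles of $\sigma$ along $F$ and from the curvature of $M$ on the associated angle spaces. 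One must bound $\sum_F\int_F\Lambda_F$ by a constant depending only on a lower curvature bound $-K_0$ for $M$ (available by compactness) and on the fixed combinatorics of $\Delta^4$: the angular quantities are normalized measures, hence bounded, while their curvature corrections are integrals of a curvature expression controlled by $K_0$ over compact angle spaces of bounded dimension. Carrying out this bound so that it is genuinely uniform over all geodesic simplices --- unwinding the iterated normal-cone and transgression structure of the $\Lambda_F$ --- together with checking that straightening preserves the fundamental class and is compatible with integrating $\mathrm{Pf}$, completes the proof.
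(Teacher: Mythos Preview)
Your overall plan --- reduce to $\chi(M)\neq 0$, straighten to geodesic simplices, then invoke Allendoerfer--Weil --- matches the paper. The reduction step is exactly what the paper cites from \cite[Corollary 5.1]{CRW23}. The gap is in the ``uniform estimate'': you propose to bound $\bigl|\int_\sigma\mathrm{Pf}\bigr|\le C$ for every geodesic $4$-simplex $\sigma$, but this is \emph{false}. Take $\widetilde M=\mathbb{H}^2\times\mathbb{H}^2$: the Gauss--Bonnet integrand $\Psi_4$ is a positive constant, while geodesic $4$-simplices have unbounded volume, so $\int_\sigma\Psi_4$ is unbounded. Your heuristic that the boundary terms $\Lambda_F$ are integrals ``over compact angle spaces of bounded dimension'' is the mistake: each $\Lambda_F$ is a double integral, over the face $F$ itself (which can be arbitrarily large) and then over the normal cone. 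In particular the codimension-one term $\int_{\sigma[3]}\Psi_3(x,\xi)\,dv$ need not be bounded, and no lower curvature bound fixes this.

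The paper's decisive observation is that one should not try to bound $\int_\sigma\Psi_4$ simplex by simplex. Instead, since $\sum_i a_i\partial\sigma_i=0$ as a chain and since the inward unit normal flips sign when a $3$-face is seen from the two adjacent $4$-simplices, the codimension-one contributions \emph{cancel} in the sum:
\[
\sum_i a_i\int_{\partial\tilde\sigma_i}\tilde\Psi_3(\tilde x,\xi_0)\,dv=0.
\]
Hence $\chi(M)=\sum_i a_i\bigl[\int_{\tilde\sigma_i}\tilde\Psi_4+\int_{\partial\tilde\sigma_i}\tilde\Psi_3\bigr]$, and by Allendoerfer--Weil this bracket equals $1$ minus the contributions of faces of dimension $\le 2$. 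Those \emph{are} uniformly bounded, but again not merely by a lower curvature bound: the edge term vanishes because edges are geodesics; the $2$-face term requires nonpositive curvature to get $\tilde\Psi_2(\tilde x,\xi)\le 0$ pointwise, after which one dominates by the intrinsic $\Psi_2$ and invokes the $2$-dimensional Gauss--Bonnet bound $\int_\Delta K\ge -\pi$ for a geodesic triangle (ten $2$-faces, giving a contribution $\le 5$); the vertex term is at most $5$. This yields the explicit constant $11$. So the missing idea is the cancellation of the $\Psi_3$ terms across the cycle; without it your scheme cannot close, and the face estimates themselves need nonpositive curvature, not just $K\ge -K_0$.
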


Regarding this conjecture, Connell and Wang \cite[Theorem 1.7]{Connell_Wang_2020} provided a purely differential geometric proof in dimension three, and also showed the positivity of simplicial volume under the assumption that the $\lfloor n/4\rfloor+1$-Ricci curvature is negative at one point \cite[Theorem 1.9]{Connell_Wang_2020}. 
Recently, Connell, Ruan, and Wang \cite{CRW2024} proved that the simplicial volume of a closed nonpositively curved manifold is positive if it admits an isolated, closed totally geodesic codimension-one submanifold.
Additionally, there is a close relationship between simplicial volume and the Euler characteristic. For instance, if a closed manifold $M$ supports an affine flat bundle of dimension $n=\mathrm{dim} M$, then it holds that $\|M\| \geq |\chi|$, where $\chi$ is the Euler number of the bundle (see \cite{MR95518, MR418119} and also \cite[Page 10]{Gromov_1982}). More generally, Gromov posed the following conjecture in \cite[Page 232]{MR1253544}:

\begin{con}\label{con2}
If $M$ is aspherical and $\|M\| = 0$, then $\chi(M) = 0$.
\end{con}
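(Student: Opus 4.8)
Conjecture~\ref{con2} in full generality is open; what the tools gathered above make accessible — and what I would set out to prove — is its special case for closed nonpositively curved four-manifolds, namely that if $M^4$ is closed and carries a metric $g$ with $\sec_g\le 0$, then $\chi(M)\neq 0$ forces $\|M\|>0$. (We may assume $M$ oriented, passing to the orientation double cover if needed, which changes neither the vanishing of $\chi$ nor of $\|M\|$.) The plan is to produce a constant $C=C(M,g)>0$ with $|\chi(M)|\le C\,\|M\|$, so that $\|M\|=0$ forces $\chi(M)=0$; the conjecture in this case is then the contrapositive. First I would straighten: by Cartan--Hadamard, $\widetilde M$ is a Hadamard manifold, hence $\mathrm{CAT}(0)$ and diffeomorphic to $\mathbb R^4$, so $M$ is aspherical and geodesics in $\widetilde M$ are unique. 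Thurston's geodesic straightening $\mathrm{str}$ — replace a singular simplex by the geodesic (``straight'') simplex on the same ordered vertices — is $\pi_1$-equivariant, descends to $M$, is chain homotopic to the identity, and is $\ell^1$-norm nonincreasing; so for any $\varepsilon>0$ one can represent $[M]$ by a smooth fundamental cycle $z=\sum_i a_i\sigma_i$ in which every $\sigma_i$ is a straight $4$-simplex and $\sum_i|a_i|<\|M\|+\varepsilon$.

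Next, let $\Pi$ be the Chern--Gauss--Bonnet $4$-form of $(M,g)$: it is closed and $\int_M\Pi=\chi(M)$, and since $\sec\le 0$ its density is pointwise $\ge 0$ by the classical algebra of the Pfaffian in dimension four. Because $\Pi$ is closed and $[z]=[M]$, the de Rham pairing gives $\sum_i a_i\int_{\sigma_i}\Pi=\int_z\Pi=\chi(M)$, whence $|\chi(M)|\le\big(\sum_i|a_i|\big)\cdot\sup_i\big|\int_{\sigma_i}\Pi\big|$. Thus everything reduces to a uniform bound $\big|\int_{\bar\sigma}\Pi\big|\le C=C(M,g)$ over all geodesic $4$-simplices $\bar\sigma$ in $\widetilde M$: granted that, letting $\varepsilon\to 0$ yields $|\chi(M)|\le C\,\|M\|$. (Conceptually, this asserts that the Euler class of $M$ admits a bounded representative.) Note one cannot simply bound $\int_{\bar\sigma}\Pi$ by $\|\Pi\|_\infty\cdot\mathrm{vol}(\bar\sigma)$, since geodesic simplices in, say, flat $\mathbb R^4$ have unbounded volume; the saving must come from cancellation between the interior curvature integral and the boundary terms.

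This uniform bound is where the Gauss--Bonnet theorem for Riemannian simplices of Allendoerfer and Weil enters, and it is the main obstacle. Their theorem expresses, for the Riemannian simplex $\bar\sigma$, the integral $\int_{\bar\sigma}\Pi$ as $\chi(\bar\sigma)=1$ minus a fixed finite collection of boundary terms, one per face $F$, each of the form (integral over $F$ of a lower-dimensional Gauss--Bonnet integrand built from the ambient curvature and the second fundamental form of $F$) times a normalized exterior (solid) angle. I would bound these term by term: the exterior angles are normalized measures of subsets of spheres and so lie in $[0,1]$; the ambient curvature is bounded, $|\sec|\le\kappa^2$, by compactness of $M$; and $\int_{\bar\sigma}\Pi$ itself is $\ge 0$ by the pointwise nonnegativity of the dimension-four Pfaffian density under $\sec\le 0$. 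The delicate point is that faces of a geodesic simplex can have large volume and are not totally geodesic, so the lower-dimensional curvature integrals over them must be controlled by Gauss--Bonnet-type identities on those faces together with the curvature bound, with the second-fundamental-form contributions kept bounded via $\mathrm{CAT}(0)$ angle comparison (nonpositive curvature makes geodesic simplices ``convex enough'' and confines all their interior and dihedral angles within Euclidean bounds). Assembling these estimates should give $\big|\int_{\bar\sigma}\Pi\big|\le C(\kappa)$. I expect turning the single-simplex Allendoerfer--Weil identity into this clean, geometry-free uniform estimate to be by far the most technical part of the argument.

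Finally, combining the above, $|\chi(M)|\le C\,\|M\|$, so $\|M\|=0\Rightarrow\chi(M)=0$; contrapositively $\chi(M)\neq 0\Rightarrow\|M\|>0$, which is Conjecture~\ref{con2} for closed nonpositively curved four-manifolds. For Conjecture~\ref{con1} in dimension four one needs only that negative Ricci curvature, together with $\sec\le 0$, forces $\chi(M)\neq 0$: the Pfaffian density of $\Pi$, already $\ge 0$ by $\sec\le 0$, is in fact strictly positive at every point when $\mathrm{Ric}<0$ (a pointwise curvature computation), so $\chi(M)=\int_M\Pi>0$, and the main theorem applies.
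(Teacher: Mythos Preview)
Your setup matches the paper's exactly: reduce to nonpositively curved $4$-manifolds, straighten to geodesic simplices, pair with the Gauss--Bonnet $4$-form, and invoke the Allendoerfer--Weil formula. The divergence is precisely at the step you flagged as ``by far the most technical part'': you propose to bound $\bigl|\int_{\bar\sigma}\Pi\bigr|$ \emph{simplex by simplex}, controlling the codimension-one face integrals $\int_{\bar\sigma[3]}\Psi_3(x,\xi_0)\,dv$ via CAT(0) comparison and a curvature pinch $|\sec|\le\kappa^2$. This is the genuine gap. The integrand $\Psi_3(x,\xi)=\tfrac{1}{2\pi^2\gamma}\det\Lambda(\xi)+\tfrac{1}{16\pi^2\gamma}\varepsilon^{abc}\varepsilon^{pqr}R_{abpq}\Lambda_{cr}(\xi)$ is integrated over $3$-faces that are \emph{not} totally geodesic and can have arbitrarily large volume; CAT(0) angle comparison controls dihedral and vertex angles, not the integrated second fundamental form of a $3$-face across its entire extent. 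No lower bound on $\mathcal G(\bar\sigma[3])$ is established, so no upper bound on $\int_{\bar\sigma}\Pi$ follows. The paper does \emph{not} attempt this.

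Instead, the paper's key idea is to exploit that $z=\sum_i a_i\sigma_i$ is a \emph{cycle} and cancel the codimension-one terms globally rather than bound them locally. Writing $\sum_i a_i\,\partial\sigma_i=\sum_j b_j\tau_j$, the condition $\partial z=0$ forces, on each $(n-1)$-dimensional patch, the signed coefficient sum to vanish; since $\Psi_3(x,\xi)$ is odd in $\xi$ (it is cubic in $\Lambda(\xi)$ plus linear in $\Lambda(\xi)$), the inward normals from the two sides contribute with opposite signs, and one gets $\sum_i a_i\int_{\partial\tilde\sigma_i}\tilde\Psi_3(\tilde x,\xi_0)\,dv=0$. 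Summing the Allendoerfer--Weil identities over the cycle then leaves only the $\Psi_0$, $\Psi_1$, $\Psi_2$ contributions, which \emph{are} individually bounded: $\Psi_1\equiv 0$ since edges are geodesics; the vertex terms contribute at most $5$; and $\tilde\Psi_2(x,\xi)\le 0$ under $\sec\le 0$, with $\int_{N(x)^*}(-\tilde\Psi_2)\,d\xi\le -\tfrac{1}{2\pi}K$ integrating over each $2$-face to at most $\tfrac{1}{2}$ by the $2$-dimensional Gauss--Bonnet inequality, giving at most $5$ over the ten $2$-faces. This yields the universal bound $|\chi(M)|\le 11\sum_i|a_i|$, hence $\|M\|\ge\tfrac{1}{11}|\chi(M)|$, with no dependence on $\kappa$. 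The cancellation of the $3$-face terms is the idea you are missing; it is what replaces the individual-simplex estimate you were aiming for.
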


For recent progress and potential approaches regarding the above conjecture, we refer the reader to \cite{MR4455175, loeh2023}.
In particular, for closed nonpositively curved $4$-manifolds, Connell, Ruan, and Wang \cite[Corollary 5.1]{CRW23} showed that Conjecture \ref{con2} implies Conjecture \ref{con1} by observing that the Ricci curvature must degenerate at some point if the Euler characteristic vanishes. Furthermore, they proposed the following equivalence \cite[Conjecture 4]{CRW23}:

\begin{con}\label{con3}
Let $M$ be a closed nonpositively curved $4$-manifold. Then $\|M\| = 0$ if and only if $\chi(M) = 0$.
\end{con}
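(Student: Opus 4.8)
The plan is to prove the two implications of Conjecture \ref{con3} separately, since they rest on genuinely different mechanisms: the forward implication is a soft bound coming from Gauss--Bonnet, while the converse is a curvature-rigidity statement. Throughout I fix a nonpositively curved metric on the closed $4$-manifold $M$ and write $\chi(M)=\int_M e(TM)$, where $e(TM)$ is the Euler (Pfaffian) form of the metric.

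\emph{Forward direction} ($\|M\|=0\Rightarrow\chi(M)=0$). This is the contrapositive of the main theorem and is where the Allendoerfer--Weil theorem enters. Given any real fundamental cycle $\sum_i a_i\sigma_i$ representing $[M]$, I would straighten each singular simplex to its geodesic straightening $\mathrm{str}(\sigma_i)$; this leaves the class $[M]$ unchanged and replaces the cycle by one built from geodesic $4$-simplices. Evaluating the Euler form against the straightened cycle gives $\chi(M)=\sum_i a_i\int_{\mathrm{str}(\sigma_i)}e(TM)$. The Allendoerfer--Weil Gauss--Bonnet formula for a Riemannian (geodesic) simplex expresses each integral $\int_{\mathrm{str}(\sigma_i)}e(TM)$ as a sum of an interior curvature integral and exterior dihedral-angle terms along the faces; nonpositive sectional curvature controls every such contribution by a universal constant $c_4$ depending only on the dimension. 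Hence $|\chi(M)|\le c_4\sum_i|a_i|$, and taking the infimum over all representatives yields $|\chi(M)|\le c_4\,\|M\|$. In particular $\|M\|=0$ forces $\chi(M)=0$. This half of the equivalence is unconditional.

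\emph{Converse direction} ($\chi(M)=0\Rightarrow\|M\|=0$). Here I would argue through curvature degeneracy and the de Rham decomposition. In dimension four, nonpositive sectional curvature forces the Pfaffian integrand to be pointwise nonnegative, so $\chi(M)=\int_M e(TM)=0$ implies that the Euler form vanishes identically. The structural claim I want is that pointwise vanishing of the Pfaffian for a nonpositively curved curvature operator in dimension four produces a flat direction: concretely, that the universal cover splits isometrically as $\widetilde{M}=\mathbb{R}^{k}\times M_0$ with a nontrivial Euclidean factor, $k\ge 1$. Granting this, the cocompact action of $\pi_1(M)$ preserves the splitting, and after passing to a finite cover (Bieberbach, trivializing the holonomy on the flat factor) the translations of $\mathbb{R}^k$ descend to a free isometric $T^{k}$-action. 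A nontrivial free $S^1$-action forces the simplicial volume of that cover to vanish, and since simplicial volume is multiplicative under finite coverings, $\|M\|=0$ follows.

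\emph{The main obstacle} is precisely the structural step in the converse: passing from the \emph{pointwise} algebraic identity $e(TM)\equiv 0$ to a \emph{global} Euclidean de Rham factor. This is a local-to-global rigidity statement intimately tied to the Chern--Hopf problem in dimension four, and it is not a formal consequence of the pointwise vanishing alone; one must exclude de Rham irreducible nonpositively curved $4$-manifolds whose flat $2$-planes are arranged so that the Pfaffian vanishes identically while no parallel flat distribution exists. Establishing this rigidity --- or, alternatively, extracting directly from the curvature degeneracy an amenable cover or an $F$-structure of positive rank --- is the crux on which the converse, and hence the full equivalence of Conjecture \ref{con3}, depends. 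It is exactly this implication that lies beyond the unconditional forward bound $|\chi(M)|\le c_4\|M\|$ provided by the main theorem.
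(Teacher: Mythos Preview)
The statement you are attempting is a \emph{conjecture} in the paper, not a theorem; the paper establishes only the forward implication $\|M\|=0\Rightarrow\chi(M)=0$, as a corollary of the quantitative inequality $\|M\|\ge\tfrac{1}{11}|\chi(M)|$ of Theorem~\ref{main thm}. The converse is left open (the paper records that it is known in the real-analytic case by Connell--Ruan--Wang via a result of Schroeder). You correctly flag the converse as the unresolved part and acknowledge that your de~Rham splitting step is conditional, so on that half your assessment agrees with the paper.

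For the forward implication your sketch follows the paper's strategy but skips the step that actually makes the bound work. You assert that after Allendoerfer--Weil each boundary contribution is a ``dihedral-angle term'' controlled by nonpositive curvature alone. This fails for the codimension-one faces: the extrinsic integrand $\Psi_3(x,\xi)$ on a $3$-face involves $\det\Lambda(\xi)$ and $R\cdot\Lambda(\xi)$ terms built from the second fundamental form, and a geodesic $3$-simplex is not totally geodesic in $\tilde M$, so these quantities are \emph{not} uniformly bounded by curvature. The paper's essential observation is that the $\Psi_3$ contributions \emph{cancel} when summed over the cycle: since $\Psi_3(x,\xi)$ is odd in the unit normal $\xi$ and adjacent $4$-simplices induce opposite normals on a shared $3$-face, the cycle condition $\partial\sigma=0$ forces the total codimension-one contribution to vanish (this is the content of \eqref{eqn14} and the combinatorial argument surrounding it). Only after this cancellation do the remaining face terms admit uniform bounds --- $\Psi_1\equiv 0$ on geodesic edges, the $\Psi_2$ integrals bounded by $5$ via the two-dimensional Gauss--Bonnet inequality of Section~\ref{sec-app1}, and the vertex terms by $5$ --- producing the constant $11$. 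Without the cancellation step, the claim that each simplex's boundary contribution is individually bounded is false, and your forward argument does not close.
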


In the case where $M$ is a real analytic nonpositively curved $4$-manifold, Connell, Ruan, and Wang \cite[Corollary 1]{CRW23} proved that $\chi(M) = 0$ implies $\|M\| = 0$ by using a result of Schroeder.

In this paper, by applying the Gauss-Bonnet theorem for Riemannian simplices as established by Allendoerfer and Weil \cite{Carl-Weil}, we prove the following main theorem, which partially resolves Conjectures~\ref{con3} and~\ref{con2} for closed nonpositively curved $4$-manifolds.
\begin{thm}\label{main thm}
Let $M$ be a closed nonpositively curved $4$-manifold. Then
\[
  \|M\|\geq \frac{1}{11}|\chi(M)|.
\]
In particular, if $\chi(M)\neq 0$, then $\|M\|>0$.
\end{thm}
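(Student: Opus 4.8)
The plan is to bound the Euler characteristic from above by a constant multiple of the simplicial volume, via a Gauss–Bonnet–type argument distributed over a cycle representing the fundamental class. The starting point is the classical Chern–Gauss–Bonnet formula in dimension four: for a closed Riemannian 4-manifold $M$,
\[
\chi(M) = \frac{1}{8\pi^2}\int_M \Big(|R|^2 - 4|\mathrm{Ric}|^2 + \mathrm{Scal}^2\Big)\,dV_g,
\]
or more usefully the pointwise statement that the Gauss–Bonnet integrand $\mr{Pf}(R)$ is, for a \emph{nonpositively curved} metric, a nonnegative multiple of the volume form — this is precisely the Chern–Milnor–Gromov observation underlying the classical Chern conjecture in dimension four. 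So $\chi(M) = \int_M \mr{Pf}(R)\,dV_g \ge 0$, and the task is to produce an \emph{upper} bound for this integral in terms of $\|M\|$.

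**The key idea**, following the use of Allendoerfer–Weil, is to smooth/straighten a real fundamental cycle $\sum_i a_i \sigma_i$ into a cycle of Riemannian (geodesic) simplices, and then apply the Gauss–Bonnet theorem \emph{for Riemannian simplices with corners} of Allendoerfer–Weil \cite{Carl-Weil} to each simplex $\sigma_i$. That theorem expresses the integral of $\mr{Pf}(R)$ over a geodesic simplex as an alternating sum of the interior-curvature integral, boundary terms involving second fundamental forms of the faces, and the "outer angle" / exterior-angle contributions at lower-dimensional strata. When one sums over a \emph{cycle}, the internal face contributions cancel in pairs (each codimension-one face is shared by two simplices with opposite orientation), and what survives is $\int_M \mr{Pf}(R) = \sum_i a_i \big(\text{angle defect terms of }\sigma_i\big)$. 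The crucial point is then a \emph{uniform bound}: for a geodesic 4-simplex in a nonpositively curved manifold, the total contribution of the Allendoerfer–Weil angle terms is bounded in absolute value by an explicit universal constant $C$ (here $C = 1/11$ after normalization, i.e. the bound will read $|\chi(M)| \le 11\sum_i |a_i|$ for every geodesic fundamental cycle, hence $|\chi(M)| \le 11\|M\|$). Such a bound is plausible because the angle-defect integrands are built from products of spherical-volume functions of exterior angles, which live in compact ranges, combined with the sign control that nonpositive curvature imposes on each face's second fundamental form.

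**So the steps, in order, would be:** (1) reduce to geodesic (straightened) fundamental cycles, observing that straightening in a nonpositively curved manifold — using the exponential map and geodesic convexity on the universal cover $\wt M = \br^4$ with its CAT(0) metric, or its quotient — does not increase the $\ell^1$-norm of the cycle, so $\|M\|$ is computed by such cycles; (2) invoke the Allendoerfer–Weil Gauss–Bonnet formula for each geodesic 4-simplex, writing $\int_{\sigma_i}\mr{Pf}(R)$ plus correction terms; (3) sum over the cycle and cancel all interior codimension-one face terms, leaving $\chi(M)$ equal to a weighted sum (weights $a_i$) of the residual exterior-angle terms; (4) establish the uniform bound $C$ on the absolute value of the residual term of a single nonpositively curved geodesic 4-simplex, using that nonpositive curvature forces the relevant normal-curvature / second-fundamental-form quantities to have a definite sign, so the alternating sum collapses to terms controlled by volumes of spherical simplices, each at most the volume of a hemisphere of the appropriate dimension; (5) conclude $|\chi(M)| \le C \cdot \|M\|$ and optimize the constant to $C = 11$.

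**The main obstacle** I anticipate is step (4): getting an honest, explicit, and correct universal constant out of the Allendoerfer–Weil formula. Their formula is notoriously intricate — it involves integrals over the simplex of the Lipschitz–Killing curvatures of all the faces weighted by functions of the "outer angles," and making the sign analysis rigorous in the nonpositively curved case (so that one genuinely gets cancellation down to a bounded-above quantity rather than something that could in principle be large) requires careful bookkeeping of the normal second fundamental forms of geodesic faces — these are controlled by comparison with the Euclidean model precisely because of nonpositive curvature (geodesic simplices are "thin"), but turning "controlled" into the specific number $1/11$ is the delicate part. A secondary subtlety is that a priori a fundamental cycle need not be realized by an \emph{embedded} triangulation, so the simplices may be degenerate or overlapping; one must either perturb to general position or check that the Allendoerfer–Weil identity and the bound survive for possibly-degenerate geodesic simplices (the bound does, since degenerate simplices only make the angle terms smaller or equal).
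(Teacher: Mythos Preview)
Your proposal is correct and follows essentially the same approach as the paper: straighten to geodesic simplices, apply Allendoerfer--Weil to each, show the codimension-one boundary contributions cancel across the cycle, and bound the remaining lower-dimensional angle terms by the explicit constant $11$. The paper's accounting for the constant is $1$ from the Gauss-map degree, $5$ from the five vertex terms, and $5$ from the ten $2$-faces (each bounded via the two-dimensional Gauss--Bonnet inequality $\bigl|\int K\,dA\bigr|\le\pi$ for a nonpositively curved geodesic triangle, after using that a geodesic $2$-face has $\Lambda_{11}=0$ so $\tilde\Psi_2\le 0$), with the edge terms vanishing since $1$-faces are geodesics.
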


Theorem \ref{main thm} proves, in particular, the implication
\[
\|M\|=0 \Longrightarrow \chi(M)=0
\]
for every closed nonpositively curved $4$-manifold $M$, and in fact yields the
stronger quantitative estimate
\(\|M\| \ge \tfrac{1}{11} |\chi(M)|.\)
This should be compared with the result of Connell, Ruan, and Wang
\cite[Corollary 1]{CRW23}, who proved the converse implication
\[
\chi(M)=0 \Longrightarrow \|M\|=0
\]
under the additional assumption that $M$ is real analytic. Combining the two
results, we obtain the following consequence, which establishes Conjecture~\ref{con3}
in the real analytic case.

\begin{cor}
Let $M$ be a closed real analytic nonpositively curved $4$-manifold. Then $\|M\|=0$ if and only if $\chi(M)=0$.
In particular, Conjecture~\ref{con3} holds for closed real analytic nonpositively curved
$4$-manifolds.
\end{cor}

As another application of Theorem \ref{main thm}, by \cite[Corollary 5.1]{CRW23}, we resolve Conjecture \ref{con1} for dimension four:
\begin{cor}
If $M$ is a closed $4$-dimensional manifold admitting a Riemannian metric with nonpositive sectional curvature and negative definite Ricci curvature, then $\|M\| > 0$.
\end{cor}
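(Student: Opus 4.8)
\emph{Proof proposal.} The plan is to reduce the inequality, by geodesic straightening and the Chern--Gauss--Bonnet theorem, to a uniform bound on the Euler form integrated over a single geodesic $4$-simplex, and then to produce that bound from the Allendoerfer--Weil Gauss--Bonnet formula together with $\mathrm{CAT}(0)$ comparison geometry. Since $M$ is nonpositively curved, the Cartan--Hadamard theorem identifies the universal cover $\widetilde M$ with $\mathbb R^4$ and supplies a unique geodesic between any two of its points; in particular $M$ is aspherical and $\widetilde M$ is a Hadamard manifold. First I would invoke the associated geodesic straightening operator $\mathrm{str}_\ast\colon C_\ast(M;\mathbb R)\to C_\ast(M;\mathbb R)$: lift a singular simplex to $\widetilde M$, replace it by the iterated geodesic cone on its vertices, and push the result back down to $M$. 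This is a chain map, chain homotopic to the identity, that sends each singular simplex to a \emph{single} (Lipschitz) geodesic simplex, so it carries a fundamental cycle $z=\sum_i a_i\sigma_i$ to a fundamental cycle $\mathrm{str}_\ast(z)=\sum_i a_i\,\mathrm{str}(\sigma_i)$ with $\|\mathrm{str}_\ast(z)\|_1\le\|z\|_1$. It therefore suffices to show $|\chi(M)|\le 11\sum_i|a_i|$ whenever all the $\sigma_i$ are geodesic simplices.

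Next, represent the Euler class $e(TM)$ by the Pfaffian $4$-form $\Theta$ of the Levi--Civita connection, normalized so that $\chi(M)=\int_M\Theta$ (Chern--Gauss--Bonnet). Pairing the de Rham class $[\Theta]=e(TM)$ with the fundamental cycle $\mathrm{str}_\ast(z)$ yields
\[
  \chi(M)=\sum_i a_i\int_{\Delta^4}\mathrm{str}(\sigma_i)^\ast\Theta .
\]
I would also use the classical algebraic fact that in dimension four nonpositive sectional curvature forces the Gauss--Bonnet integrand $\Theta\ge 0$ pointwise (the affirmative Hopf inequality in this dimension), so that each $\int_{\mathrm{str}(\sigma_i)}\Theta\ge 0$. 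Everything then reduces to the geometric estimate
\[
  0\le\int_\sigma\Theta\le 11 \qquad\text{for every geodesic $4$-simplex }\sigma\subset M .
\]

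The heart of the matter is this estimate, and here I would apply the Allendoerfer--Weil formula to $\sigma$, viewed as a Riemannian polyhedron with $\chi(\sigma)=1$. It expresses
\[
  \int_\sigma\Theta \;=\; 1-\sum_{A\subsetneq\sigma}\Psi(A),
\]
the sum running over the $30$ proper faces of $\sigma$ (five $3$-faces, ten $2$-faces, ten edges, five vertices), where $\Psi(A)$ is the Allendoerfer--Weil boundary contribution of $A$: an integral over $A$ of a universal polynomial in the ambient curvature and the second fundamental form of $A$, paired with the normalized measure of the cone of outward normals along $A$. One then needs a lower bound $\sum_A\Psi(A)\ge -10$. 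The favorable inputs are that the vertex (outer solid-angle) terms are nonnegative and bounded by the volume of $S^3$, and that on an even-dimensional face the relevant quantity has an intrinsic Gauss--Bonnet meaning: along a geodesic $2$-face $T$ the edges are geodesics of $T$, so $\int_T K_T=\sum_j\alpha_j-\pi\in(-\pi,0]$ by the $\mathrm{CAT}(0)$ angle inequality, which bounds the $2$-face contributions. The delicate terms are those attached to the odd-dimensional ($1$- and $3$-dimensional) faces, which carry the second fundamental form and have no intrinsic Gauss--Bonnet interpretation; these I would control through the global convexity features of the Hadamard manifold $\widetilde M$ --- convexity of the distance function, monotonicity of angles, thinness of geodesic simplices --- forcing the second fundamental forms of geodesic sub-simplices into a one-sided regime and dominating their contributions by the Euclidean model values. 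Tallying the finitely many contributions, with the precise normalizing volumes of normal sphere-sectors in dimensions $0,1,2,3$, gives $\int_\sigma\Theta\le 11$, hence $|\chi(M)|\le 11\sum_i|a_i|$, and taking the infimum over fundamental cycles yields $\|M\|\ge\frac1{11}|\chi(M)|$. (Equivalently, this shows $e(TM)$ is represented by a bounded cocycle of $\ell^\infty$-norm at most $11$, and the inequality follows from the duality between simplicial volume and bounded cohomology.)

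The main obstacle is exactly the last estimate: nonpositive sectional curvature bounds neither the norm of the curvature tensor nor the diameter of a geodesic simplex, so the Allendoerfer--Weil boundary integrals $\Psi(A)$ must be shown to remain uniformly bounded even though they involve potentially unbounded second fundamental forms; making this quantitative --- and in particular extracting the explicit constant $11$ rather than an inferior one --- requires a careful, face-by-face use of Hadamard comparison geometry, and this is where the real work lies. A secondary technicality is that straightening may produce degenerate geodesic simplices; such a simplex contributes $0$ to $\int\Theta$ and causes no harm, but the Allendoerfer--Weil formula and the comparison estimates should be set up (e.g.\ by a perturbation or limiting argument) to accommodate that case as well.
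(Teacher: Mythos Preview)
Your proposal is really a sketch of the main theorem $\|M\|\ge\tfrac{1}{11}|\chi(M)|$, not of this Corollary. You never use the hypothesis that the Ricci curvature is negative definite, and you never conclude $\chi(M)\neq 0$; your argument ends at $\|M\|\ge\tfrac{1}{11}|\chi(M)|$, which by itself only gives $\|M\|\ge 0$. The paper's proof of the Corollary is one line: combine the main theorem with the observation of Connell--Ruan--Wang that, in dimension four with nonpositive sectional curvature, $\chi(M)=0$ forces the Gauss--Bonnet density $\tfrac{1}{32\pi^2}(|R|^2-4|\mathrm{Ric}|^2+R_g^2)$ to vanish identically and hence forces $\mathrm{Ric}$ to degenerate at some point. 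That step is missing from your proposal.

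In your sketch of the inequality there is also a substantive divergence from the paper, and it is precisely at the point you flag as ``the main obstacle.'' You try to prove a \emph{single-simplex} bound $\int_\sigma\Theta\le 11$ by bounding each Allendoerfer--Weil boundary term $\Psi(A)$, and you hope to control the $3$-face term via Hadamard comparison. The paper does \emph{not} do this, and there is no indication such a bound is available: the $3$-face integrand $\Psi_3(x,\xi)=\tfrac{1}{2\pi^2\gamma}\det\Lambda(\xi)+\tfrac{1}{16\pi^2\gamma}\varepsilon^{abc}\varepsilon^{pqr}R_{abpq}\Lambda_{cr}(\xi)$ mixes the unbounded ambient curvature with the second fundamental form of a ruled (non--totally-geodesic) hypersurface, and nonpositive curvature gives no one-sided control over either factor. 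Instead, the paper observes that $\Psi_3$ is odd in the normal direction $\xi$, so the codimension-one contributions cancel \emph{across the cycle}: from $\partial\bigl(\sum_i a_i\sigma_i\bigr)=0$ one deduces $\sum_i a_i\int_{\partial\sigma_i}\Psi_3(x,\xi_0)\,dv=0$, and this identity is added to $\chi(M)=\sum_i a_i\int_{\sigma_i}\Psi_4$ before applying Allendoerfer--Weil. After that cancellation only the $0$-, $1$-, and $2$-face terms survive; the $1$-face term is identically zero because the edges are ambient geodesics (so $\Lambda_{11}=0$ and $\Psi_1\equiv 0$ --- these are not ``delicate'' at all), and the $0$- and $2$-face terms are each bounded by $5$, giving $|\chi(M)|\le 11\sum_i|a_i|$. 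Thus the key idea you are missing is not a comparison estimate on a single simplex but a cancellation over the whole fundamental cycle that eliminates the $3$-face contribution altogether.
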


On the other hand, as noted in \cite[Page 10]{Gromov_1982}, if $M_1$ and $M_2$ are closed $n$-dimensional manifolds with $n \geq 3$, then
\begin{equation*}
 \|M_1 \# M_2\| = \|M_1\| + \|M_2\|, \quad \|M_1 \times M_2\| \geq  \tfrac{1}{C}\|M_1\| \|M_2\|,
\end{equation*}
where $C > 0$ is a constant depending only on $\dim(M_1 \times M_2)$.

By Theorem \ref{main thm}, we obtain the following corollary:

\begin{cor}
Let $M_1$ be a closed nonpositively curved $4$-manifold with nonzero Euler characteristic. Then:
\begin{itemize}
  \item[(i)] $\|M_1 \# M_2\| > 0$ for any closed $4$-manifold $M_2$;
  \item[(ii)] $\|M_1 \times M_2\| > 0$ for any closed manifold $M_2$ with nonzero simplicial volume.
\end{itemize}
\end{cor}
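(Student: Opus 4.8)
The plan is to deduce both statements directly from the positivity $\|M_1\|>0$ supplied by Theorem~\ref{main thm}, combined with the two structural estimates for simplicial volume recalled just above the corollary (Gromov's additivity under connected sum and the multiplicativity-type lower bound for products). No new geometric input is needed; the work is entirely in checking that the hypotheses of those two formulas are met and then substituting.

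For part (i), I would first observe that since $M_1$ is a closed nonpositively curved $4$-manifold with $\chi(M_1)\neq 0$, Theorem~\ref{main thm} gives $\|M_1\|\geq \tfrac{1}{11}|\chi(M_1)|>0$. Because the simplicial volume is by construction a nonnegative quantity, $\|M_2\|\geq 0$ for every closed $4$-manifold $M_2$. Since $\dim(M_1\#M_2)=4\geq 3$, Gromov's additivity formula for connected sums applies and yields
\[
\|M_1\#M_2\|=\|M_1\|+\|M_2\|\geq \|M_1\|>0,
\]
which is the assertion.

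For part (ii), I would again invoke Theorem~\ref{main thm} to obtain $\|M_1\|>0$, and combine it with the standing hypothesis $\|M_2\|>0$. Applying the product inequality $\|M_1\times M_2\|\geq \tfrac{1}{C}\|M_1\|\,\|M_2\|$, in which $C=C(\dim(M_1\times M_2))>0$ is a positive constant, the right-hand side is a product of two strictly positive numbers, so $\|M_1\times M_2\|>0$ follows at once.

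The only point requiring a moment's attention—and the closest thing to an obstacle—is confirming that the two cited formulas are genuinely applicable: the connected-sum additivity requires the ambient dimension to be at least $3$, which holds here since $n=4$, and both formulas should be applied to honest closed manifolds for which the fundamental class, and hence $\|\cdot\|$, is defined. Once these bookkeeping conditions are verified, each part is a one-line substitution, so I anticipate no further difficulty.
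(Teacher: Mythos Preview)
Your proposal is correct and matches the paper's intended argument: the corollary is stated without proof precisely because it follows immediately from Theorem~\ref{main thm} together with the two structural formulas for simplicial volume quoted just above it. Your verification of the hypotheses (dimension $\geq 3$ for the connected-sum formula, positivity of both factors for the product inequality) is exactly the bookkeeping the reader is expected to supply.
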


Homology spheres and open book decomposition are important issues to 4-dimensional topologists. As an application of Theorem~\ref{main thm}, there exist $4$-dimensional integral homology spheres with positive simplicial volume; see \cite{MR2114711} and \cite[Proposition~18]{Kotschick2025}. Moreover, a nonpositively curved Riemannian manifold with nonzero Euler characteristic cannot admit an open book decomposition \cite[Theorem~B]{kastenholz2025}.

\

\text{{\bfseries{Acknowledgments.}}} 
The authors would like to thank the anonymous referees for their careful reading and valuable comments, which helped improve the paper.
The second author is grateful to Professors Haiping Fu, Shi Wang, and Francesco Milizia for many helpful discussions. The first author gratefully acknowledges the hospitality and support of the Max Planck Institute during his visit.
Research by Inkang Kim is partially supported by RS-2026-25468457 and KIAS Individual Grant (MG031408). Research by Xueyuan Wan is sponsored by the National Key R\&D Program of China (Grant No. 2024YFA1013200).

\

\section{Positivity of simplicial volume}

In this section, we first recall the definitions of simplicial volume and geodesic simplices. We then review the Gauss-Bonnet theorem for Riemannian simplices established by Allendoerfer and Weil. Finally, we present the proof of our main theorem.

\subsection{Definitions of simplicial volume and geodesic simplices}\label{sec-sim-geo}

We first recall the definitions of simplicial volume and geodesic simplices. For further details, see also~\cite{Lafont_Schmidt_2006, Gromov_1982}.

\subsubsection{Simplicial volume } The definition of simplicial volume is as follows.
\begin{defn}
Let $M$ be a topological space and $C^0(\Delta^k, M)$ be the set of singular $k$-simplices. For a singular real chain $\sigma = \sum_{i=1}^N a_i \sigma_i \in C_k(M,\mathbb{R})$, where each $a_i \in \mathbb{R}$ and $\sigma_i \in C^0(\Delta^k, M)$, the $l^1$-norm of $\sigma$ is defined by
\[
\|\sigma\| = \sum_{i=1}^N |a_i|.
\] 
The $l^1$-(pseudo)norm of a real singular homology class $\alpha \in H_k(M, \mathbb{R})$ is defined by
\[
\|\alpha\| = \inf \left\{\|\sigma\|: \partial\sigma=0 \text{ and }[\sigma]=\alpha\right\}.
\]
\end{defn}

\begin{defn}
Let $M^n$ be an oriented, closed, connected $n$-manifold. The \emph{simplicial volume} of $M^n$ is defined as
\[
\|M^n\|=\|i([M^n])\|,
\] 
where $i: H_n(M, \mathbb{Z}) \rightarrow H_n(M, \mathbb{R})$ is the change of coefficients homomorphism, and $[M^n]$ denotes the fundamental class arising from the orientation of $M^n$.
\end{defn}

\begin{rem}
This invariant is multiplicative under finite covers, thus its definition can be naturally extended to all closed, non-orientable manifolds.
\end{rem}

\subsubsection{Geodesic simplices}\label{geodesic-simplex}
Let $(M,g)$ be a closed Riemannian manifold with nonpositive sectional curvature. Consider the universal cover $p:\tilde{M}\to M$, which is a local isometry when $\tilde{M}$ is endowed with the pullback metric $p^*g$. We revisit the concept of a geodesic $k$-simplex; see also~\cite[Section~1]{Inoue_Yano_1982} and \cite[Section 2.2]{MR2499443}.

We use nonhomogeneous coordinates for the standard simplex throughout the paper:
\[
\Delta^k
=
\left\{(t^1,\ldots,t^k)\in\mathbb{R}^k:
 t^\alpha\geq 0,\ \sum_{\alpha=1}^k t^\alpha\leq 1\right\},
\]
with $\Delta^0$ consisting of one point. We identify $\Delta^{k-1}$ with the face
$\{(t^1,\ldots,t^k)\in\Delta^k:t^k=0\}$ and write $e_k$ for the $k$-th standard basis vector of $\mathbb{R}^k$.

Given an ordered $(k+1)$-tuple of points $\{p_0,\ldots,p_k\}$ in $\tilde M$, we define the
geodesic simplex $\sigma_{p_0\cdots p_k}:\Delta^k\to \tilde M$ inductively as follows.
For $k=0$, $\sigma_{p_0}$ is the constant map with image $p_0$. Assume that
$\sigma_{p_0\cdots p_{k-1}}:\Delta^{k-1}\to \tilde M$ has been defined. Then
$\sigma_{p_0\cdots p_k}$ is defined by joining each point of the image of
$\sigma_{p_0\cdots p_{k-1}}$ to $p_k$ by the unique geodesic segment, parametrized
proportionally to arc length. More precisely, identifying
\[
\Delta^k=\{(1-t)x+t e_k \mid x\in \Delta^{k-1},\ t\in[0,1]\},
\]
we set
\begin{equation*}
\sigma_{p_0\cdots p_k}\bigl((1-t)x+t e_k\bigr)
\end{equation*}
to be the point at time $t$ on the geodesic segment from
$\sigma_{p_0\cdots p_{k-1}}(x)$ to $p_k$. By \cite[Proposition 2.4]{MR2499443}, each geodesic $k$-simplex is smooth since the exponential map is a diffeomorphism.

Given a singular simplex $\sigma\colon \Delta^k \to M$, define $\mathrm{str}(\sigma):=p\circ \tilde{\sigma}$, where $\tilde{\sigma}$ is the geodesic $k$-simplex in $\tilde{M}$ determined by the vertices of a lift of $\sigma$. We call $\mathrm{str}(\sigma)$ the geodesic straightening of $\sigma$, or simply a geodesically straight simplex.
 Then $\mathrm{str}$ extends linearly to a map
\[
\mathrm{str}:C_*(M,\mathbb{R})\to C_*(M,\mathbb{R})
\]
satisfying:
\begin{itemize}
  \item $\mathrm{str}$ is a chain map which is chain homotopic to the identity;
  \item $\|\mathrm{str}(\sigma)\|\leq \|\sigma\|$ for all $\sigma\in C_*(M,\mathbb{R})$.
\end{itemize}
Hence, for any $\alpha\in H_*(M,\mathbb{R})$, we have 
\begin{equation*}
  \|\alpha\|=\inf\{\|\sigma\| : \sigma \in \mathrm{str}(C_*(M,\mathbb{R}))\text{ represents }\alpha\}.
\end{equation*}

In particular, the simplicial volume can be expressed as
\begin{equation*}
  \|M\|=\|i([M])\|=\inf\{\|\sigma\| : \sigma \in \mathrm{str}(C_*(M,\mathbb{R}))\text{ represents } i([M])\in H_n(M,\mathbb{R})\}.
\end{equation*}

\subsection{Gauss-Bonnet theorem for Riemannian simplices}\label{sec-2}

In this section, we will recall the Gauss-Bonnet theorem for Riemannian simplices \cite{Carl-Weil}; see also \cite[Section 2]{McMullen}.

Let $S^n \subset \mathbb{R}^{n+1}$ denote the unit sphere, and let $B^{n+1}$ denote the unit ball.
We denote by
\begin{equation}\label{area-sphere}
  \omega_n=2\cdot\tfrac{(4\pi)^{n/2}\Gamma(\tfrac{n}{2}+1)}{n!}
\end{equation}
the volume of $S^n$.
Let $A$ be a smooth Riemannian manifold of dimension $n$. Choose a local
coordinate chart $(x^1,\dots,x^n)$ near $x\in A$, and write $g_{ij}$ for the
metric tensor, $g=\det(g_{ij})$, and $R_{ijkl}$ for the components of the
Riemann curvature tensor in these local coordinates.
 Here and throughout the paper, $S_n$ denotes the symmetric group on
$\{1,\ldots,n\}$, namely the set of all permutations of $\{1,\ldots,n\}$.
For $i=(i_1,\ldots,i_n)\in S_n$, we view $i$ as the permutation sending
$t$ to $i_t$ for each $t\in\{1,\ldots,n\}$, and similarly for
$j=(j_1,\ldots,j_n)\in S_n$.
When $n$ is even, the
intrinsic Gauss-Bonnet integrand on $A$ is given by
\begin{equation}\label{eqn4}
  \Psi_n(x)=\tfrac{2}{\omega_n}\cdot\tfrac{1}{2^{n/2}n!}\sum_{i,j\in S_n}\tfrac{\epsilon(i)\epsilon(j)}{g}R_{i_1i_2j_1j_2}\cdots R_{i_{n-1}i_nj_{n-1}j_n}.
\end{equation}
 Here $\epsilon(i)$ and $\epsilon(j)$ denote the signs of the permutations $i$ and $j$. When $n$ is odd, we set $\Psi_n(x)=0$. The Gauss-Bonnet theorem states that for any closed manifold $A$ we have
\begin{equation}\label{eq2.4}
\chi(A)=\int_A \Psi_n(x)\,dv(x),
\end{equation}
where $dv(x)$ is the volume form on $A$ induced by the Riemannian metric.

Now let $A$ be an $r$-dimensional submanifold of a Riemannian manifold $B$ of
dimension $n$, where $r<n$. Let $R_{ijkl}$ denote the restriction of the Riemann curvature tensor on $B$ to $A$, and let $\Lambda_{ij}(\xi)$ denote the second fundamental form, a symmetric tensor on $A$ depending linearly on a normal vector $\xi$. In local coordinates, we have
\begin{equation*}
  \Lambda_{ij}(\xi)=\langle\n_{e_i}e_j,\xi\rangle.
\end{equation*}
The extrinsic Gauss-Bonnet integrand is the function on the unit normal bundle to $A$ defined by
\begin{equation}\label{eqn6}
  \Psi_r(x,\xi)=\sum_{0\leq 2f\leq r}\Psi_{r,f}(x,\xi),
\end{equation}
where 
\begin{align}\label{eqn10}
\begin{split}
  \Psi_{r,f}(x,\xi)=&\tfrac{2}{\omega_{2f}\omega_{n-2f-1}}\cdot\tfrac{1}{2^f(2f)!(r-2f)!}\cdot\sum_{i,j\in S_r}\tfrac{\epsilon(i)\epsilon(j)}{\gamma}\\
  &\times R_{i_1i_2j_1j_2}\cdots R_{i_{2f-1}i_{2f}j_{2f-1}j_{2f}}\Lambda_{i_{2f+1}j_{2f+1}}(\xi)\cdots\Lambda_{i_rj_r}(\xi).
 \end{split}
\end{align}
Here $\gamma$ is the determinant of the
induced metric on $A$. When it is necessary to record the ambient manifold, we write
$\Psi_r^{A\subset B}(x,\xi)$ for the extrinsic integrand in \eqref{eqn6}--\eqref{eqn10}
associated with the embedding $A^r\subset B^n$, and we write $\Psi_n^A(x)$ for the
intrinsic integrand in \eqref{eqn4}. When the submanifold and the ambient manifold are
clear, these superscripts are suppressed. This convention is applied facewise when $A$
is a union of faces.

We adopt the convention that empty products are equal to
$1$. In particular, the edge cases of \eqref{eqn10} are interpreted as follows:
\[
\Psi_{0,0}(x,\xi)=\frac{1}{\omega_{n-1}},
\]
and
\[
\Psi_{1,0}(x,\xi)=\frac{1}{\omega_{n-1}\gamma}\Lambda_{11}(\xi).
\]
These special cases will be used later in the low-dimensional computations.

The intrinsic and extrinsic integrands on $A$ are related by 
\begin{equation}\label{eqn9}
  \Psi_r(x)=\int_{S(x)}\Psi_r(x,\xi)d\xi,
\end{equation}
where $S(x)$ is the unit sphere (of dimension $n-r-1$) in the normal bundle to $A$ at $x$, and $\int_{S(x)}d\xi=\omega_{n-r-1}$. In particular, if $A$ is a closed submanifold of $B$ then we have 
\begin{equation*}
  \chi(A)=\int_A dv(x)\int_{S(x)}\Psi_r(x,\xi)d\xi.
\end{equation*}

Let  \( M \) be a simplex equipped with a smooth Riemannian metric. For any \( r < n \), let \( M[r] \) denote the union of all \( r \)-dimensional faces of \( \partial M \), and let \( M[n] = M \setminus \partial M \). Since $M$ is a smooth Riemannian simplex, we may extend its Riemannian metric to a small neighborhood of $M$. In this way, each face of $M$ can be regarded as a smooth submanifold of this neighborhood, and hence its normal bundle and its extrinsic Gauss--Bonnet integrand are well-defined. For each point \( x \in M[r] \), we define
\[
N(x) \subset S(x)
\]
as the convex set of unit vectors normal to \( M[r] \) that point inward toward \( M[n] \).

Since \( M \) is a Riemannian simplex, we may isometrically embed \( M \hookrightarrow \mathbb{R}^{N+1} \) for some sufficiently large \( N \). Let \( T \subset \mathbb{R}^{N+1} \) be the boundary of a small tubular neighborhood of \( M \), i.e., the set of points at fixed distance \( \delta > 0 \) from \( M \). Denote by 
\[ \mc{G}: T\to S^N \] 
the Gauss map. For a sufficiently small \( \delta \), we also have a well-defined nearest-point projection \( \pi: T \to M \).

Let \( T[r] = \pi^{-1}(M[r]) \subset T \) be the subset of \( T \) that lies closest to \( M[r] \). Then for any \( t \in T[r] \), we have a corresponding vector \( \xi = \mc{G}(t) \in S^N \) and a point \( x = \pi(t) \in M[r] \). 
Using this correspondence and Weyl's tube formula \cite{Weyl1939}, one finds that the contributions to the degree of the Gauss map are given by
\begin{equation}\label{eqn1}
\mathcal{G}(M[n]) = \tfrac{1}{\omega_N} \int_{T[n]} \mc{G}^*(d\xi) = \int_{M[n]} \Psi_n(x)\, dv(x),
\end{equation}
and for \( r < n \),
\begin{equation}\label{eqn2}
\mathcal{G}(M[r]) = \tfrac{1}{\omega_N} \int_{T[r]} \mc{G}^*(d\xi) = \int_{M[r]} dv(x) \int_{N(x)^*} \Psi_r(x, \xi)\, d\xi,
\end{equation}
where $\omega_N$ denotes the volume of the unit sphere $S^N$. For each
$x\in M[r]$, we view
\[
T_x(M[r]) \subset T_x\mathbb{R}^{N+1}\cong \mathbb{R}^{N+1}
\]
as the tangent space of the $r$-face $M[r]$ at $x$, regarded as a linear subspace
of the ambient Euclidean space. Thus $N(x)\subset S(x)\subset (T_x(M[r]))^\perp$
is a convex subset of the unit sphere in the orthogonal complement of $T_x(M[r])$.
We define
\begin{equation}\label{eqn25}
N(x)^*
:=
\left\{
\xi\in (T_x(M[r]))^\perp : |\xi|=1,\ \langle \xi,v\rangle\ge 0
\ \text{for all } v\in N(x)
\right\}.
\end{equation}
Equivalently, $N(x)^*$ is the spherical dual of the inward normal cone $N(x)$
inside the unit sphere $S(x)\subset (T_x(M[r]))^\perp$.

Remarkably, these quantities are independent of the choice of isometric embedding \( M \hookrightarrow \mathbb{R}^{N+1} \).
In particular, for $r = n-1$, the fiber $N(x)^*$ consists of only the unit inward normal vector. Therefore,
\begin{equation}\label{eqn13}
  \mathcal{G}(M[n-1]) = \int_{M[n-1]} \Psi_{n-1}(x, \xi) \, dv(x),
\end{equation}
where $\xi=\xi(x)$ denotes the unit inward normal vector.

 Since \( M \) is a simplex, the Gauss map has degree one; the Gauss-Bonnet theorem for a Riemannian simplex is given as follows:
\begin{equation}\label{eqn8}
 \mathcal{G}(M[n])+ \mathcal{G}(M[n-1])+\cdots+ \mathcal{G}(M[1])+ \mathcal{G}(M[0])=1.
\end{equation}

\begin{rem}\label{rem1}
More generally, consider a pair of smoothly embedded submanifolds
\(A \subset B \subset C\). For each \(x \in A\), let \(S_B(x)\) and
\(S_C(x)\) denote the fibers of the unit normal bundles to \(A\),
viewed as a submanifold of \(B\) and of \(C\), respectively. A convex
subset \(K \subset S_B(x) \subset S_C(x)\) then has two duals,
namely \(K_B^*=\bigl(K, S_B(x)\bigr)^*\) and
\(K_C^*=\bigl(K, S_C(x)\bigr)^*\). Likewise, in the notation introduced after \eqref{eqn10}, the two extrinsic
Gauss--Bonnet integrands are \(\Psi_r^{A\subset B}\) and
\(\Psi_r^{A\subset C}\), respectively. In this notation we have the basic
relation
\begin{equation}\label{eqn17}
  \int_{K_B^*} \Psi_r^{A\subset B}(x, \xi)\,d\xi
  = \int_{K_C^*} \Psi_r^{A\subset C}(x, \xi)\,d\xi.
\end{equation}
Equation \eqref{eqn9} can be regarded as a special case of \eqref{eqn17}, with $A=B$ and $K=\emptyset$.
\end{rem}

\subsubsection{Gauss-Bonnet theorem for $n=2$}\label{sec-app1}

Suppose $M$ is a two-dimensional Riemannian simplex. By \eqref{eqn4}, we have
\begin{equation}\label{eq:GB-2-simplex-1}
  \Psi_2(x)=\tfrac{1}{2\pi}\tfrac{R_{1212}}{g}=\tfrac{1}{2\pi}K,
\end{equation}
where $K:=R_{1212}/g$ denotes the Gaussian curvature. Moreover, by \eqref{eqn6} and \eqref{eqn10}, we have
\[
\Psi_1(x,\xi)=\Psi_{1,0}(x,\xi)=\tfrac{1}{2\pi\gamma}\Lambda_{11}(\xi), \quad \Psi_0(x,\xi)=\Psi_{0,0}(x,\xi)=\tfrac{1}{2\pi}.
\]
Thus, by \eqref{eqn8}, the Gauss-Bonnet theorem for $M$ takes the form
\[
  \tfrac{1}{2\pi}\int_{M[2]}K\,dv(x) 
  + \tfrac{1}{2\pi}\int_{M[1]}dv(x)\int_{N(x)^*}\tfrac{1}{\gamma}\Lambda_{11}(\xi)\,d\xi 
  + \tfrac{1}{2\pi}\sum_{i=1}^3\mathrm{vol}(N(x_i)^*)=1.
\]
In particular, if $M$ is a geodesic simplex, then we have $\Lambda_{11}(\xi)=0$. Therefore, the Gauss-Bonnet formula simplifies to
\[
 \int_{M[2]}K\,dv(x)+\sum_{i=1}^3\mathrm{vol}(N(x_i)^*)=2\pi.
\]
Observe that each $\mathrm{vol}(N(x_i)^*)=\alpha_i\in(0,\pi]$ is precisely the exterior angle at vertex $x_i$. Thus, we conclude
\begin{equation}\label{eq:GB-2-simplex}
   \int_{M[2]}K\,dv(x)=2\pi-(\alpha_1+\alpha_2+\alpha_3)=-\pi+\sum_{i=1}^3(\pi-\alpha_i)\geq -\pi.
\end{equation}

\subsubsection{Gauss-Bonnet integrands for $n=4$}\label{sec-app2}
Now suppose $M$ is a four-dimensional Riemannian simplex. In the later proof of Theorem~1.4, only the formulas for $\Psi_0$, $\Psi_1$,
and $\Psi_2$ will be used essentially. We also include the expressions for
$\Psi_3$ and $\Psi_4$ here for completeness. By equations \eqref{eqn4}, \eqref{eqn6}, and \eqref{eqn10}, we have
\begin{equation*}
\Psi_0(x,\xi)=\Psi_{0,0}(x,\xi)=\tfrac{1}{2\pi^2},\quad  
\Psi_{1}(x,\xi)=\Psi_{1,0}(x,\xi)=\tfrac{1}{2\pi^2\gamma}\Lambda_{11}(\xi),
\end{equation*}
\begin{equation}\label{eqn24}
\Psi_{2}(x,\xi)=\tfrac{R_{1212}+2\,\operatorname{det}\Lambda(\xi)}{4\pi^2\gamma},\quad
\Psi_{3}(x,\xi)=\tfrac{\operatorname{det}\Lambda(\xi)}{2\pi^2\gamma}+\tfrac{1}{16\pi^2\gamma}\varepsilon^{abc}\varepsilon^{pqr}R_{abpq}\Lambda_{cr}(\xi),
\end{equation}
and
\[
\Psi_4(x)=\tfrac{1}{32\pi^2}\left(R_{ijkl}R_{ijkl}-4R_{ij}R_{ij}+R^2\right)=\tfrac{1}{32\pi^2}\left(|R|_g^2-4|Ric|_g^2+R_g^2\right).
\]

\subsubsection{Gauss-Bonnet integrands for $n=8$}\label{sec-app8}

If $n=8$, by \eqref{eqn6} and \eqref{eqn10}, we obtain
\[
\Psi_0(x,\xi)
 = \Psi_{0,0}(x,\xi)
 = \frac{3}{\pi^4},
\qquad
\Psi_1(x,\xi)
 = \Psi_{1,0}(x,\xi)
 = \frac{3}{\pi^4\,\gamma}\,\Lambda_{11}(\xi),
\]
and
\[
\Psi_2(x,\xi)
 = \Psi_{2,0}(x,\xi)+\Psi_{2,1}(x,\xi)
 = \frac{R_{1212}+6\,\operatorname{det}\Lambda(\xi)}{2\pi^4\,\gamma}.
\]
Here $\Lambda(\xi)=(\Lambda_{ij}(\xi))_{1\le i,j\le 2}$ when $r=2$, so that
$\det\Lambda(\xi)=\Lambda_{11}(\xi)\Lambda_{22}(\xi)-\Lambda_{12}(\xi)^2$.

\subsection{Proof of main theorem}
In this section, we will prove Theorem \ref{main thm}.
\begin{proof}[Proof of Theorem \ref{main thm}]
Let $(M,g)$ be a closed nonpositively curved $4$-manifold. By Section \ref{geodesic-simplex}, the simplicial volume of $M$ can be computed using geodesically straight simplices. More precisely,
\[
\|M\|
=
\inf\left\{
\begin{array}{l}
\|\sigma\| : \sigma \text{ is a real singular }4\text{-cycle representing }[M],\text{and each} \\ 
\text{simplex of }\sigma \text{ is the projection of a geodesic }4\text{-simplex in }\tilde M
\end{array}
\right\},
\]
see also \cite[Section~1]{Inoue_Yano_1982} and \cite[Proposition~2.12]{MR2499443}.

If $\omega$ is a smooth $p$-form on $M$ and $\sigma$ is any smooth $p$-simplex in $M$, then the integral of $\omega$ over $\sigma$ is defined by
\begin{equation}\label{eqn11}
\int_\sigma \omega := \int_{\Delta^p} \sigma^* \omega.
\end{equation}
In particular, if $\sigma \colon \Delta^p \to M$ is an embedding and $\dim M=p$, then
\begin{equation}\label{embedding-inter}
\int_\sigma \omega
=
\int_{\Delta^p} \sigma^*\omega
=
\epsilon_\sigma \int_{\sigma(\Delta^p)} \omega,
\end{equation}
where $\epsilon_\sigma$ denotes the degree of the map $\sigma\colon \Delta^p\to \sigma(\Delta^p)$. More precisely, $\epsilon_\sigma=1$ if $\sigma$ is orientation-preserving, and $\epsilon_\sigma=-1$ if $\sigma$ is orientation-reversing.
If $c=\sum_{i=1}^k c_i\sigma_i$ is a smooth $p$-chain, we define
\begin{equation}\label{eqn-chain-integral}
\int_c\omega
:=
\sum_{i=1}^k c_i\int_{\sigma_i}\omega
=
\int_{\Delta^p}\Bigl(\sum_{i=1}^k c_i\sigma_i^*\omega\Bigr)
=:
\int_{\Delta^p} c^*\omega,
\end{equation}
where $c^*\omega:=\sum_{i=1}^k c_i\sigma_i^*\omega$; see \cite[p.~481]{Lee_2013}.

Let $\tilde{\sigma}=\sum_{i=1}^N a_i\tilde{\sigma}_i$ be a chain in the universal cover $\tilde M$, where each $\tilde{\sigma}_i\colon \Delta^4\to \tilde M$ is a geodesic simplex. Let $\sigma:=p\circ \tilde{\sigma}$ be the corresponding chain in $M$, where $p\colon \tilde M\to M$ is the covering map, and assume that $\sigma$ represents the fundamental class $[M]$. By \cite[Proposition~2.4]{MR2499443}, each geodesic simplex $\tilde{\sigma}_i$ is smooth. Hence $\sigma=p\circ \tilde{\sigma}$ is a smooth singular $4$-chain in $M$. Writing $\sigma_i:=p\circ \tilde{\sigma}_i$, we have $\sigma=\sum_{i=1}^N a_i\sigma_i$, and since $\sigma$ represents $[M]$, it follows that
\begin{equation}\label{eqn16}
\partial \sigma=\sum_{i=1}^N a_i\,\partial \sigma_i=0.
\end{equation}

By the Cartan--Hadamard theorem, $\tilde M$ is diffeomorphic to $\mathbb{R}^4$ via the exponential map. We therefore identify $\tilde M$ with $\mathbb{R}^4$ and regard $(\tilde M,\tilde g:=p^*g)$ as an isometrically and totally geodesically embedded submanifold of $(\mathbb{R}^8,g_8)$, where $g_8=\tilde g\oplus g_{\mathrm{eucl}}$ and $g_{\mathrm{eucl}}$ denotes the standard Euclidean metric on $\mathbb{R}^4$.

A geodesic simplex $\tilde{\sigma}_i$ need not be an immersion. Nevertheless, after an arbitrarily small perturbation, we may construct, for each $i$, a one-parameter family $\{\tilde{\sigma}_i^\varepsilon\}_{\varepsilon>0}$ with $\tilde{\sigma}_i^\varepsilon\colon \Delta^4\to \mathbb{R}^8$ such that every $\tilde{\sigma}_i^\varepsilon$ is a smooth embedding and $\tilde{\sigma}_i^\varepsilon\to \tilde{\sigma}_i$ as $\varepsilon\to 0$. Indeed, under the identification $\tilde M\cong \mathbb{R}^4$, we may define
\begin{equation}\label{eqn-pertur}
\tilde{\sigma}_i^\varepsilon(t^1,\ldots,t^4)
=
\bigl(\tilde{\sigma}_i(t^1,\ldots,t^4),
\varepsilon t^1,\ldots,\varepsilon t^4\bigr)
\in \mathbb{R}^8\equiv \tilde M\times\mathbb{R}^4,
\end{equation}
where $t=(t^1,\ldots,t^4)\in\Delta^4\subset\mathbb{R}^4$.
It is immediate that $\tilde{\sigma}_i^\varepsilon$ is an embedding for every $\varepsilon>0$, and that $\tilde{\sigma}_i^\varepsilon\to \tilde{\sigma}_i$ as $\varepsilon\to 0$.

For later use, set
\[
A_i^\varepsilon:=\tilde\sigma_i^\varepsilon(\Delta^4)\subset\mathbb{R}^8.
\]
We write
\[
\tilde\Psi_{4,i}^\varepsilon:=\Psi_4^{A_i^\varepsilon}
\]
for the intrinsic Gauss--Bonnet integrand of $A_i^\varepsilon$ with its induced metric,
and write $\tilde\Psi_4:=\Psi_4^{\tilde M}$ for the intrinsic integrand of
$(\tilde M,\tilde g)$. Let $dv_g$ and $dv_{\tilde g}$ denote the volume forms of
$(M,g)$ and $(\tilde M,\tilde g)$, respectively. For each $\varepsilon>0$, let
$dv_{\tilde{\sigma}_i^\varepsilon}$ denote the volume form of the embedded Riemannian
manifold $A_i^\varepsilon$ satisfying
$(\tilde{\sigma}_i^\varepsilon)^*(dv_{\tilde{\sigma}_i^\varepsilon})\to
\tilde{\sigma}_i^*(dv_{\tilde g})$ as $\varepsilon\to 0$.

By~\eqref{eq2.4}, we obtain
\begin{equation}\label{eqn7}
\begin{aligned}
\chi(M)
&=\int_M \Psi_4(x)\,dv_g(x) \\
&=\sum_{i=1}^N a_i \int_{\tilde{\sigma}_i}\tilde{\Psi}_4(\tilde x)\,dv_{\tilde g}(\tilde x) \\
&=\lim_{\varepsilon\to 0}\sum_{i=1}^N a_i \int_{\tilde{\sigma}_i^\varepsilon}\tilde\Psi_{4,i}^\varepsilon(\tilde x)\,dv_{\tilde{\sigma}_i^\varepsilon}(\tilde x) \\
&=\lim_{\varepsilon\to 0}\sum_{i=1}^N a_i\,\epsilon_{\tilde{\sigma}_i^\varepsilon}
\int_{\tilde{\sigma}_i^\varepsilon(\Delta^4)}\tilde\Psi_{4,i}^\varepsilon(\tilde x)\,dv_{\tilde{\sigma}_i^\varepsilon}(\tilde x).
\end{aligned}
\end{equation}
Here the second equality follows from the fact that $\sigma=\sum_{i=1}^N a_i\sigma_i$ represents the fundamental class $[M]$. Indeed, $\Psi_4(x)\,dv_g(x)$ is a closed $4$-form on $M$, so its integral over $M$ agrees with its pairing with any smooth singular $4$-cycle representing $[M]$. Therefore,
\[
\int_M \Psi_4(x)\,dv_g(x)
=
\sum_{i=1}^N a_i \int_{\sigma_i} \Psi_4(x)\,dv_g(x).
\]
Since $\sigma_i=p\circ \tilde{\sigma}_i$ and $p^*(\Psi_4\,dv_g)=\tilde{\Psi}_4\,dv_{\tilde g}$, we further obtain
\[
\sum_{i=1}^N a_i \int_{\sigma_i} \Psi_4(x)\,dv_g(x)
=
\sum_{i=1}^N a_i \int_{\tilde{\sigma}_i} \tilde{\Psi}_4(\tilde{x})\,dv_{\tilde g}(\tilde{x}),
\]
which is exactly the second equality in \eqref{eqn7}. Since each $\tilde{\sigma}_i^\varepsilon$ is an embedding, \eqref{embedding-inter} applies, and the integrand $\tilde\Psi_{4,i}^\varepsilon(\tilde x)$ in the last line is given by \eqref{eqn4} in terms of the curvature tensor of the induced metric on $\tilde{\sigma}_i^\varepsilon(\Delta^4)$.

We next analyze the boundary term. The idea is to show that the corresponding extrinsic Gauss--Bonnet integrals over the $3$-faces of $\tilde{\sigma}_i^\varepsilon(\Delta^4)$ vanish. Combining this with \eqref{eqn7} and \eqref{eqn8}, we can then express $\chi(M)$ as a sum of extrinsic Gauss--Bonnet integrals over the $0$-, $1$-, and $2$-faces. Finally, we estimate these lower-dimensional contributions by using the special properties of geodesic simplices.

The Riemannian manifold $\bigl(\tilde{\sigma}_i^\varepsilon(\Delta^4),g_8|_{\tilde{\sigma}_i^\varepsilon(\Delta^4)}\bigr)$ induces both a metric and an orientation on its boundary. Let $dv_{\partial\tilde{\sigma}_i^\varepsilon}$ denote the induced volume form on the boundary $\partial(\tilde{\sigma}_i^\varepsilon(\Delta^4))$. If $\xi_0$ denotes the inward unit normal vector field along the boundary, then
\begin{equation}\label{eqn-or}
dv_{\partial\tilde{\sigma}_i^\varepsilon}
=
i^*_{\partial(\tilde{\sigma}_i^\varepsilon(\Delta^4))}
\bigl((-\xi_0)\lrcorner\, dv_{\tilde{\sigma}_i^\varepsilon}\bigr).
\end{equation}
See, e.g., \cite[Corollary~15.34]{Lee_2013}. Here $i_{\partial(\tilde{\sigma}_i^\varepsilon(\Delta^4))}:\partial(\tilde{\sigma}_i^\varepsilon(\Delta^4))\hookrightarrow \tilde{\sigma}_i^\varepsilon(\Delta^4)$ denotes the natural inclusion.
For each open $3$-face $F\subset\partial A_i^\varepsilon$, define
\[
\tilde\Psi_{3,i}^\varepsilon\big|_F
:=
\Psi_3^{F\subset A_i^\varepsilon}.
\]
Thus $\tilde\Psi_{3,i}^\varepsilon(\tilde x,\xi_0)$ denotes the extrinsic
Gauss--Bonnet integrand of the $3$-face containing $\tilde x$, viewed as a
hypersurface of $A_i^\varepsilon$ and evaluated at its inward unit normal $\xi_0$.
All boundary integrands and integrals below are understood facewise; intersections of
distinct $3$-faces have $3$-dimensional measure zero.
The corresponding boundary integral is
\[
\int_{\partial\tilde{\sigma}_i^\varepsilon}
\tilde\Psi_{3,i}^\varepsilon(\tilde x,\xi_0)\,
dv_{\partial\tilde{\sigma}_i^\varepsilon}(\tilde x).
\]

By definition,
\[
\partial\tilde{\sigma}_i^\varepsilon=\sum_{k=0}^4(-1)^k\tilde{\sigma}_i^\varepsilon\circ d_k^4,
\]
where $d_k^q\colon \Delta^{q-1}\to \Delta^q$ is the standard face map. With the
nonhomogeneous-coordinate convention for $\Delta^q$ fixed in
Section~\ref{geodesic-simplex}, we define
\[
d_k^q(t^1,\dots,t^{q-1})=(t^1,\dots,t^{k-1},0,t^k,\dots,t^{q-1})
\]
for $1\le k\le q$, and
\[
d_0^q(t^1,\dots,t^{q-1})=(1-\sum_{\alpha=1}^{q-1}t^\alpha,\ t^1,\dots,t^{q-1}).
\]
Let $F_k:=d_k^q(\Delta^{q-1})\subset \partial\Delta^q$. Then $F_k$ carries the boundary orientation induced from $\Delta^q$.
We now determine the degree of $d_k^q$ with respect to this orientation. For $k\ge 1$, we have $F_k=\{t^k=0\}\subset \partial\Delta^q$, whose outward unit normal is $-\frac{\partial}{\partial t^k}$. Hence the induced orientation form on $F_k$ is
\[
\omega_k
=
-\tfrac{\partial}{\partial t^k}\,\lrcorner\,\Omega_q
=
(-1)^k\,dt^1\wedge\cdots\wedge \widehat{dt^k}\wedge\cdots\wedge dt^q,
\]
where $\Omega_q=dt^1\wedge\cdots\wedge dt^q$ denotes the standard volume form on $\Delta^q$.
Therefore $(d_k^q)^*\omega_k=(-1)^k\Omega_{q-1}$, so $\epsilon_{d_k^q}=(-1)^k$ for $k\ge 1$. For $k=0$, the face $F_0$ is given by
\(
F_0=\left\{(t^1,\dots,t^q)\in \Delta^q:\sum_{\alpha=1}^q t^\alpha=1\right\}.
\)
An outward normal vector is $\sum_{\alpha=1}^q \frac{\partial}{\partial t^\alpha}$. Hence the boundary orientation form induced from $\Delta^q$ is
\[
\omega_0
=
(\sum_{\alpha=1}^q \tfrac{\partial}{\partial t^\alpha})\!\lrcorner\,
\Omega_q
=
\sum_{\alpha=1}^q (-1)^{\alpha-1}
dt^1\wedge\cdots\wedge \widehat{dt^\alpha}\wedge\cdots\wedge dt^q.
\]
A direct calculation shows that
\(
(d_0^q)^*\omega_0
=
q\Omega_{q-1}.
\)
In particular, $(d_0^q)^*\omega_0$ is a positive multiple of the standard orientation form on $\Delta^{q-1}$. Hence $d_0^q$ is orientation-preserving, and therefore $\epsilon_{d_0^q}=1$. Altogether,
\[
\epsilon_{d_k^q}=(-1)^k,\qquad 0\le k\le q.
\]

Moreover, the restriction $\tilde{\sigma}_i^\varepsilon|_{F_k}\colon F_k\to \partial(\tilde{\sigma}_i^\varepsilon(\Delta^4))$ has the same orientation sign as $\tilde{\sigma}_i^\varepsilon$. Consequently,
\[
\epsilon_{\tilde{\sigma}_i^\varepsilon\circ d_k^4}
=
\epsilon_{\tilde{\sigma}_i^\varepsilon|_{F_k}}\epsilon_{d_k^4}
=
\epsilon_{\tilde{\sigma}_i^\varepsilon}(-1)^k.
\]
It follows that
\begin{equation}\label{eqn21}
\begin{aligned}
\int_{\partial\tilde{\sigma}_i^\varepsilon}
\tilde\Psi_{3,i}^\varepsilon(\tilde x,\xi_0)\,dv_{\partial\tilde{\sigma}_i^\varepsilon}(\tilde x)
&=\sum_{k=0}^4(-1)^k
\int_{\tilde{\sigma}_i^\varepsilon\circ d_k^4}
\tilde\Psi_{3,i}^\varepsilon(\tilde x,\xi_0)\,dv_{\partial\tilde{\sigma}_i^\varepsilon}(\tilde x) \\
&=\sum_{k=0}^4(-1)^k\epsilon_{\tilde{\sigma}_i^\varepsilon\circ d_k^4}
\int_{\tilde{\sigma}_i^\varepsilon\circ d_k^4(\Delta^3)}
\tilde\Psi_{3,i}^\varepsilon(\tilde x,\xi_0)\,dv_{\partial\tilde{\sigma}_i^\varepsilon}(\tilde x) \\
&=\epsilon_{\tilde{\sigma}_i^\varepsilon}
\int_{\partial(\tilde{\sigma}_i^\varepsilon(\Delta^4))}
\tilde\Psi_{3,i}^\varepsilon(\tilde x,\xi_0)\,dv_{\partial\tilde{\sigma}_i^\varepsilon}(\tilde x).
\end{aligned}
\end{equation}

We claim that
\begin{equation}\label{eqn22}
\lim_{\varepsilon\to 0}
\sum_{i=1}^N a_i
\int_{\partial\tilde{\sigma}_i^\varepsilon}
\tilde\Psi_{3,i}^\varepsilon(\tilde x,\xi_0)\,dv_{\partial\tilde{\sigma}_i^\varepsilon}(\tilde x)
=0.
\end{equation}
By \eqref{embedding-inter}, one has
\[
\lim_{\varepsilon\to 0}
\sum_{i=1}^N a_i
\int_{\partial\tilde{\sigma}_i^\varepsilon}
\tilde\Psi_{3,i}^\varepsilon(\tilde x,\xi_0)\,dv_{\partial\tilde{\sigma}_i^\varepsilon}(\tilde x)
=
\lim_{\varepsilon\to 0}
\int_{\Delta^3}
\sum_{i=1}^N a_i
(\partial\tilde{\sigma}_i^\varepsilon)^*
\bigl(\tilde\Psi_{3,i}^\varepsilon(\tilde x,\xi_0)\,dv_{\partial\tilde{\sigma}_i^\varepsilon}(\tilde x)\bigr).
\]
By Remark \ref{rem1} and \eqref{eqn17}, if $F$ is the open $3$-face of
$A_i^\varepsilon$ containing $\tilde x$, then
\[
\tilde\Psi_{3,i}^\varepsilon(\tilde{x},\xi_0)
=
\int_{\{\xi_0\}_{\mathbb{R}^8}^{*}}
\Psi_3^{F\subset\mathbb{R}^8}(\tilde{x},\xi)\,d\xi.
\]
Here $\{\xi_0\}_{\mathbb{R}^8}^{*}$ is the dual set of $\{\xi_0\}$ with respect to the
inclusion $F\subset\mathbb{R}^8$; see Remark~\ref{rem1} and \eqref{eqn25}.
Each term in the expansion of
$\Psi_3^{F\subset\mathbb{R}^8}(\tilde{x},\xi)\,\gamma$ is a product of
curvature components and components of the second fundamental form
$\Lambda_{ab}(\xi)=\langle\nabla_{e_a}e_b,\xi\rangle$. More precisely,
\begin{align*}
\begin{split}
\Psi_3^{F\subset\mathbb{R}^8}(\tilde{x},\xi)\,\gamma
&=
\sum_{0\le 2f\le 3}
\frac{2}{\omega_{2f}\omega_{8-2f-1}}
\cdot
\frac{1}{2^f(2f)!(3-2f)!}
\sum_{\alpha,\beta\in S_3}\epsilon(\alpha)\epsilon(\beta) \\
&\quad\times
R_{\alpha_1\alpha_2\beta_1\beta_2}\cdots
R_{\alpha_{2f-1}\alpha_{2f}\beta_{2f-1}\beta_{2f}}
\Lambda_{\alpha_{2f+1}\beta_{2f+1}}(\xi)\cdots
\Lambda_{\alpha_3\beta_3}(\xi) \\
&=
\frac{\det(\Lambda(\xi))}{\omega_7}
+
\frac{1}{8\pi\omega_5}
\sum_{\alpha,\beta\in S_3}\epsilon(\alpha)\epsilon(\beta)\,
R_{\alpha_1\alpha_2\beta_1\beta_2}
\Lambda_{\alpha_3\beta_3}(\xi).
\end{split}
\end{align*}
Exactly as in the proof of \eqref{uniform-bound},
$\Psi_3^{F\subset\mathbb{R}^8}(\tilde{x},\xi)\,\gamma$ is bounded on the
sphere tangent bundle of $\tilde M\times I_0$ for any compact subset
$I_0\subset\mathbb{R}^4$. On the other hand, one can choose such a compact subset $I_0\subset \mathbb{R}^4$ so that, for every $\varepsilon>0$, the corresponding dual set $\{\xi_0\}_{\mathbb{R}^8}^{*}$ is contained in the sphere tangent bundle of $\tilde{M}\times I_0$.  Therefore, after integration over the
dual set, the form
$\tilde\Psi_{3,i}^\varepsilon(\tilde{x},\xi_0)\,
dv_{\partial\tilde\sigma_i^\varepsilon}(\tilde{x})$ is uniformly bounded.

Moreover, since the boundary maps $\tilde{\sigma}_i^\varepsilon\circ d_k^4\colon \Delta^3\to \partial(\tilde{\sigma}_i^\varepsilon(\Delta^4))$ form a smooth family on the compact domain $\Delta^3$ and converge to $\tilde{\sigma}_i\circ d_k^4$ as $\varepsilon\to 0$, the pullback forms
\(
(\partial\tilde{\sigma}_i^\varepsilon)^*
\bigl(
\tilde\Psi_{3,i}^\varepsilon(\tilde{x},\xi_0)\,
dv_{\partial\tilde{\sigma}_i^\varepsilon}(\tilde{x})
\bigr)
\)
are also uniformly bounded on $\Delta^3$. Equivalently, if $\Omega_3:=dt^1\wedge dt^2\wedge dt^3$ denotes the standard volume form on $\Delta^3$, then the coefficient functions 
$$(\partial\tilde{\sigma}_i^\varepsilon)^*
\bigl(
\tilde\Psi_{3,i}^\varepsilon(\tilde{x},\xi_0)\,
dv_{\partial\tilde{\sigma}_i^\varepsilon}(\tilde{x})
\bigr)/\Omega_3$$
are uniformly bounded, independently of $i$ and $\varepsilon$.

By the dominated convergence theorem, in order to prove \eqref{eqn22}, it suffices to show that for every point $t\in \Delta^3$, one has
\begin{equation}\label{eqn26}
\lim_{\varepsilon\to 0}
\sum_{i=1}^N a_i
(\partial\tilde{\sigma}_i^\varepsilon)^*
\bigl(\tilde\Psi_{3,i}^\varepsilon(\tilde{x},\xi_0)\,dv_{\partial\tilde{\sigma}_i^\varepsilon}(\tilde{x})\bigr)(t)
=0.
\end{equation}

Fix $t\in\Delta^3$. Since each $\tilde\sigma_i^\varepsilon$ is an embedding,
after shrinking to a sufficiently small neighborhood $U_t$ of $t$, every map
$\sigma_i^\varepsilon\circ d_k^4$ is an embedding on $U_t$, where
$\sigma_i^\varepsilon:=\tilde p\circ\tilde\sigma_i^\varepsilon$ and
$\tilde p=(p,\mathrm{Id})\colon\tilde M\times\mathbb{R}^4\to
M\times\mathbb{R}^4$. Put $B_i^\varepsilon:=\sigma_i^\varepsilon(\Delta^4)$.
On each local $3$-face $F\subset\partial B_i^\varepsilon$, write
\[
\Psi_{3,i}^\varepsilon\big|_F:=\Psi_3^{F\subset B_i^\varepsilon},
\]
and let $dv_{\partial B_i^\varepsilon}$ be the corresponding boundary volume form.
Because $\tilde p$ is a local isometry, on $U_t$ we have
\[
\sum_{i=1}^N a_i
(\partial\tilde\sigma_i^\varepsilon)^*
\bigl(\tilde\Psi_{3,i}^\varepsilon(\tilde x,\xi_0)\,
dv_{\partial\tilde\sigma_i^\varepsilon}(\tilde x)\bigr)
=
\sum_{i=1}^N a_i
(\partial\sigma_i^\varepsilon)^*
\bigl(\Psi_{3,i}^\varepsilon(x,\xi_0)\,
dv_{\partial B_i^\varepsilon}(x)\bigr).
\]

Let $\{\tau_1,\ldots,\tau_{N_1}\}$ be the collection of distinct singular
$3$-simplices among $\{\sigma_i\circ d_k^4\}_{i,k}$. For
$i\in\{1,\ldots,N\}$, $k\in\{0,\ldots,4\}$, and
$j\in\{1,\ldots,N_1\}$, define
\[
\delta_{ik,j}:=
\begin{cases}
1,&\text{if }\sigma_i\circ d_k^4=\tau_j,\\
0,&\text{otherwise.}
\end{cases}
\]
Because the $\tau_j$ are distinct, $\sum_{j=1}^{N_1}\delta_{ik,j}=1$ for every
$i$ and $k$. Moreover, the coefficient of each $\tau_j$ in
$\sum_{i=1}^N a_i\partial\sigma_i$ is zero by \eqref{eqn16}; hence
\begin{equation}\label{eqn-face-coeff}
\sum_{i=1}^N\sum_{k=0}^4 a_i(-1)^k\delta_{ik,j}=0
\qquad(1\le j\le N_1).
\end{equation}

For $\varepsilon>0$, define
\[
\tau_{jk}^\varepsilon\colon\Delta^3\longrightarrow M\times\mathbb{R}^4,
\qquad
\tau_{jk}^\varepsilon(u):=
\bigl(\tau_j(u),\varepsilon d_k^4(u)\bigr).
\]
The perturbation formula \eqref{eqn-pertur} gives
\[
\sigma_i^\varepsilon\circ d_k^4=\tau_{jk}^\varepsilon
\quad\Longleftrightarrow\quad
\sigma_i\circ d_k^4=\tau_j.
\]
Consequently,
\begin{align}\label{eqn-face-expansion}
\begin{split}
&\sum_{i=1}^N a_i
(\partial\sigma_i^\varepsilon)^*
\bigl(\Psi_{3,i}^\varepsilon(x,\xi_0)\,dv_{\partial B_i^\varepsilon}(x)\bigr)\\
&\qquad=
\sum_{i=1}^N\sum_{k=0}^4\sum_{j=1}^{N_1}
a_i(-1)^k\delta_{ik,j}
(\tau_{jk}^\varepsilon)^*
\bigl(\Psi_{3,i}^\varepsilon(x,\xi_0)\,dv_{\partial B_i^\varepsilon}(x)\bigr).
\end{split}
\end{align}

For each $j$, choose $i(j)$ and $k(j)$ such that
$\delta_{i(j)k(j),j}=1$, and set
\[
\Theta_j^\varepsilon
:=
(\tau_{jk(j)}^\varepsilon)^*
\bigl(\Psi_{3,i(j)}^\varepsilon(x,\xi_0)\,
dv_{\partial B_{i(j)}^\varepsilon}(x)\bigr).
\]
By \eqref{eqn-face-coeff},
\[
\sum_{i=1}^N\sum_{k=0}^4\sum_{j=1}^{N_1}
a_i(-1)^k\delta_{ik,j}\,\Theta_j^\varepsilon=0.
\]
Subtracting this identity from \eqref{eqn-face-expansion} gives
\[
\begin{aligned}
&\sum_{i=1}^N a_i
(\partial\sigma_i^\varepsilon)^*
\bigl(\Psi_{3,i}^\varepsilon(x,\xi_0)\,dv_{\partial B_i^\varepsilon}(x)\bigr)\\
&\quad=
\sum_{i=1}^N\sum_{k=0}^4\sum_{j=1}^{N_1}
a_i(-1)^k\delta_{ik,j}
\left[
(\tau_{jk}^\varepsilon)^*
\bigl(\Psi_{3,i}^\varepsilon(x,\xi_0)\,dv_{\partial B_i^\varepsilon}(x)\bigr)
-\Theta_j^\varepsilon
\right].
\end{aligned}
\]
Thus it is enough to prove the following pairwise assertion: if
$\delta_{i_1k_1,j}=\delta_{i_2k_2,j}=1$, then
\begin{equation}\label{eqn-pairwise}
\begin{split}
\lim_{\varepsilon\to0}\Bigl[&
(\tau_{jk_1}^\varepsilon)^*
\bigl(\Psi_{3,i_1}^\varepsilon(x,\xi_0)\,
dv_{\partial B_{i_1}^\varepsilon}(x)\bigr)(t)\\
&-
(\tau_{jk_2}^\varepsilon)^*
\bigl(\Psi_{3,i_2}^\varepsilon(x,\xi_0)\,
dv_{\partial B_{i_2}^\varepsilon}(x)\bigr)(t)
\Bigr]=0.
\end{split}
\end{equation}

We prove \eqref{eqn-pairwise}. Suppose first that
$\operatorname{rank}d\tau_j(t)<3$. The boundary $3$-forms above are uniformly
bounded, and
$d\tau_{jk}^\varepsilon(t)\to(d\tau_j(t),0)$ as $\varepsilon\to0$. Hence
\[
\left|
(\tau_{jk}^\varepsilon)^*
\bigl(\Psi_{3,i}^\varepsilon(x,\xi_0)\,
dv_{\partial B_i^\varepsilon}(x)\bigr)(t)
\right|
\le C\left|\bigwedge^3d\tau_{jk}^\varepsilon(t)\right|
\longrightarrow0
\]
whenever $\delta_{ik,j}=1$. Here \(\bigwedge^3 d\tau_{jk}^{\varepsilon}(t)\) denotes the linear map induced by \(d\tau_{jk}^{\varepsilon}(t)\) on third exterior powers.
Thus both terms in \eqref{eqn-pairwise} tend to zero.

Assume now that $\operatorname{rank}d\tau_j(t)=3$. After shrinking $U_t$ again,
$\tau_j|_{U_t}$ is an embedding. Every pair $(i,k)$ with $\delta_{ik,j}=1$
has the same unperturbed boundary germ $\tau_j(U_t)\subset M$. Along this
hypersurface a unit normal is determined up to sign. If $\nu$ is either choice,
set
\[
\eta_j
:=
\Psi_3^{\tau_j(U_t)\subset M}(x,\nu)\,
i_{\tau_j(U_t)}^*\bigl(({-\nu})\lrcorner\,dv_g\bigr).
\]
The explicit formula \eqref{eqn24} shows that
$\Psi_3^{\tau_j(U_t)\subset M}(x,\nu)$ is odd in $\nu$, while the displayed
boundary volume form is also odd in $\nu$. Therefore $\eta_j$ is independent
of the choice of sign. The smooth convergence of the perturbed faces and their
induced geometric data now gives, for every $(i,k)$ with $\delta_{ik,j}=1$,
\[
\lim_{\varepsilon\to0}
(\tau_{jk}^\varepsilon)^*
\bigl(\Psi_{3,i}^\varepsilon(x,\xi_0)\,
dv_{\partial B_i^\varepsilon}(x)\bigr)(t)
=
\tau_j^*\eta_j(t).
\]
Thus the two terms in \eqref{eqn-pairwise} have the same limit. This proves
\eqref{eqn-pairwise}, and hence \eqref{eqn26}. By dominated convergence,
\eqref{eqn22} follows.

By \eqref{eqn21}, it follows that
\begin{align*}
&\quad \lim_{\varepsilon\to 0}
\sum_{i=1}^N a_i\,\epsilon_{\tilde{\sigma}_i^\varepsilon}
\int_{\tilde{\sigma}_i^\varepsilon(\Delta^4)[3]}
\tilde\Psi_{3,i}^\varepsilon(\tilde x,\xi_0)\,dv_{\partial\tilde{\sigma}_i^\varepsilon}(\tilde x)\\
&=\lim_{\varepsilon\to 0}
\sum_{i=1}^N a_i\,\epsilon_{\tilde{\sigma}_i^\varepsilon}
\int_{\partial(\tilde{\sigma}_i^\varepsilon(\Delta^4))}
\tilde\Psi_{3,i}^\varepsilon(\tilde x,\xi_0)\,dv_{\partial\tilde{\sigma}_i^\varepsilon}(\tilde x)=0.
\end{align*}

For $k=0,1,2$ and for each open $k$-face $F\subset A_i^\varepsilon[k]$, define facewise
\[
\tilde\Psi_{k,i}^{\varepsilon,\mathbb{R}^8}\big|_F
:=
\Psi_k^{F\subset\mathbb{R}^8}.
\]
Let $N_{i,\varepsilon}(\tilde x)^*$ denote the corresponding dual normal set
in the unit normal sphere of $F\subset\mathbb{R}^8$. By Remark~\ref{rem1}, the
fiber integrals of these explicitly defined integrands agree with the
$0$-, $1$-, and $2$-face terms in the Gauss--Bonnet formula, independently of
the Euclidean isometric embedding used to derive that formula.

Combining this with \eqref{eqn7}, we obtain
\[
\begin{aligned}
\chi(M)
&=
\lim_{\varepsilon\to 0}
\sum_{i=1}^N a_i\,\epsilon_{\tilde{\sigma}_i^\varepsilon}
\left[
\int_{\tilde{\sigma}_i^\varepsilon(\Delta^4)}
\tilde\Psi_{4,i}^\varepsilon(\tilde x)\,dv_{\tilde{\sigma}_i^\varepsilon}(\tilde x)
+
\int_{\tilde{\sigma}_i^\varepsilon(\Delta^4)[3]}
\tilde\Psi_{3,i}^\varepsilon(\tilde x,\xi_0)\,dv_{\partial\tilde{\sigma}_i^\varepsilon}(\tilde x)
\right] \\
&=
-\lim_{\varepsilon\to 0}
\sum_{i=1}^N a_i\,\epsilon_{\tilde{\sigma}_i^\varepsilon}
\Biggl[
\int_{\tilde{\sigma}_i^\varepsilon(\Delta^4)[2]}
dv_{\tilde{\sigma}_i^\varepsilon[2]}(\tilde x)
\int_{N_{i,\varepsilon}(\tilde x)^*}\tilde\Psi_{2,i}^{\varepsilon,\mathbb{R}^8}(\tilde x,\xi)\,d\xi \\
&\qquad\qquad\qquad\qquad
+\int_{\tilde{\sigma}_i^\varepsilon(\Delta^4)[1]}
dv_{\tilde{\sigma}_i^\varepsilon[1]}(\tilde x)
\int_{N_{i,\varepsilon}(\tilde x)^*}\tilde\Psi_{1,i}^{\varepsilon,\mathbb{R}^8}(\tilde x,\xi)\,d\xi \\
&\qquad\qquad\qquad\qquad
+\int_{\tilde{\sigma}_i^\varepsilon(\Delta^4)[0]}
dv_{\tilde{\sigma}_i^\varepsilon[0]}(\tilde x)
\int_{N_{i,\varepsilon}(\tilde x)^*}\tilde\Psi_{0,i}^{\varepsilon,\mathbb{R}^8}(\tilde x,\xi)\,d\xi-1
\Biggr],
\end{aligned}
\]
where the second equality follows from the Gauss--Bonnet theorem for Riemannian simplices; see \eqref{eqn8}. Here $dv_{\tilde{\sigma}_i^\varepsilon[k]}$ denotes the induced volume form on the $k$-skeleton $\tilde{\sigma}_i^\varepsilon(\Delta^4)[k]$.

Since each $\tilde{\sigma}_i$ has five vertices and $\tilde\Psi_{0,i}^{\varepsilon,\mathbb{R}^8}(\tilde x,\xi)=\frac{3}{\pi^4}>0$ by Section \ref{sec-app8}, we have
\[
0\le
\int_{\tilde{\sigma}_i^\varepsilon(\Delta^4)[0]}
dv_{\tilde{\sigma}_i^\varepsilon[0]}(\tilde x)
\int_{N_{i,\varepsilon}(\tilde x)^*}\tilde\Psi_{0,i}^{\varepsilon,\mathbb{R}^8}(\tilde x,\xi)\,d\xi
=
\frac{1}{\omega_{N_0}}\int_{T_\varepsilon[0]}\mathcal{G}^*(d\xi)
\le 5,
\]
where the middle equality follows from \eqref{eqn2}. Here $N_0$ is chosen so that the Riemannian simplex $\tilde{\sigma}_i^\varepsilon(\Delta^4)$ admits an isometric embedding into some Euclidean space $\mathbb{R}^{N_0+1}$.

By the definition above and Remark~\ref{rem1}, the $k=1,2$ fiber integrals are
computed using the embeddings
\[
\tilde\sigma_i^\varepsilon(\Delta^4)[k]\subset\mathbb{R}^8.
\]
Moreover, the forms
\[
\tilde\Psi_{k,i}^{\varepsilon,\mathbb{R}^8}(\tilde x,\xi)\,
dv_{\tilde\sigma_i^\varepsilon[k]}(\tilde x)
\]
are continuous and uniformly bounded. Indeed, by Section~\ref{sec-app8},
\begin{equation}\label{uniform-bound}
\begin{aligned}
&\quad \bigl|\tilde\Psi_{2,i}^{\varepsilon,\mathbb{R}^8}(\tilde x,\xi)\,dv_{\tilde{\sigma}_i^\varepsilon[2]}(\tilde x)\bigr|=
\frac{1}{2\pi^4}\bigl|(R_{1212}+6\det\Lambda(\xi))\,e_1^*\wedge e_2^*\bigr| \\
&\le
\frac{1}{2\pi^4}
\max_{x\in M\times I_0}
\max_{\substack{e_1\wedge e_2\in \Lambda^2T_x(M\times I_0)\\ \|e_1\|=\|e_2\|=1}}
\Bigl(
|R(e_1,e_2,e_1,e_2)|
+
6\max_{\substack{\xi\perp(e_1\wedge e_2)\\ \|\xi\|=1}}
|\det\Lambda(\xi)|
\Bigr),
\end{aligned}
\end{equation}
where $I_0\subset\mathbb{R}^4$ is a compact subset such that all points of $\tilde{p}\circ\tilde{\sigma}_i^\varepsilon(\Delta^4)$ lie in $M\times I_0$. An analogous bound holds for $\bigl|\tilde\Psi_{1,i}^{\varepsilon,\mathbb{R}^8}(\tilde x,\xi)\,dv_{\tilde{\sigma}_i^\varepsilon[1]}(\tilde x)\bigr|$.

Furthermore, by Section \ref{sec-app8},
\[
\bigl|\tilde\Psi_{1,i}^{\varepsilon,\mathbb{R}^8}(\tilde x,\xi)\,dv_{\tilde{\sigma}_i^\varepsilon[1]}(\tilde x)\bigr|
=
\left|\frac{3}{\pi^4}\Lambda_{11}(\xi)\right|
\longrightarrow 0
\qquad\text{as }\varepsilon\to 0,
\]
since each $1$-face $\tilde{\sigma}_i[1]$ is geodesic. By the dominated convergence theorem,
\[
\lim_{\varepsilon\to 0}
\int_{\tilde{\sigma}_i^\varepsilon(\Delta^4)[1]}
dv_{\tilde{\sigma}_i^\varepsilon[1]}(\tilde x)
\int_{N_{i,\varepsilon}(\tilde x)^*}\tilde\Psi_{1,i}^{\varepsilon,\mathbb{R}^8}(\tilde x,\xi)\,d\xi
=0.
\]

For $\tilde\Psi_{2,i}^{\varepsilon,\mathbb{R}^8}$, Section \ref{sec-app8} gives $\tilde\Psi_{2,i}^{\varepsilon,\mathbb{R}^8}(\tilde x,\xi)=(R_{1212}+6\det\Lambda(\xi))/(2\pi^4\,\gamma)$. Choose a local orthonormal frame $\{e_1,e_2\}$ on the geodesic $2$-simplex $\tilde{\sigma}_i^\varepsilon[2]$ so that $e_1$ is tangent to a geodesic direction. Then $\Lambda_{11}(\xi)\to 0$ as $\varepsilon\to 0$, and hence 
$$\det\Lambda(\xi)=\Lambda_{11}(\xi)\Lambda_{22}(\xi)-\Lambda_{12}(\xi)^2\to -\Lambda_{12}(\xi)^2.$$
 Since the sectional curvature is nonpositive, we obtain
\[
\tilde\Psi_{2,i}^{\varepsilon,\mathbb{R}^8}(\tilde x,\xi)\,\gamma
\longrightarrow
\frac{R_{1212}-6\Lambda_{12}(\xi)^2}{2\pi^4}
\le
\frac{R_{1212}}{2\pi^4}
\le 0
\qquad\text{as }\varepsilon\to 0.
\]
Therefore, by the dominated convergence theorem,
\[
\begin{aligned}
0
&\le
\lim_{\varepsilon\to 0}
\int_{\tilde{\sigma}_i^\varepsilon(\Delta^4)[2]}
dv_{\tilde{\sigma}_i^\varepsilon[2]}(\tilde x)
\int_{N_{i,\varepsilon}(\tilde x)^*}\bigl(-\tilde\Psi_{2,i}^{\varepsilon,\mathbb{R}^8}(\tilde x,\xi)\bigr)\,d\xi \\
&\le
\lim_{\varepsilon\to 0}
\int_{\tilde{\sigma}_i^\varepsilon(\Delta^4)[2]}
dv_{\tilde{\sigma}_i^\varepsilon[2]}(\tilde x)
\int_{S(\tilde x)_\varepsilon}\bigl(-\tilde\Psi_{2,i}^{\varepsilon,\mathbb{R}^8}(\tilde x,\xi)\bigr)\,d\xi \\
&=
\lim_{\varepsilon\to 0}
\int_{\tilde{\sigma}_i^\varepsilon(\Delta^4)[2]}
\bigl(-\Psi_2^{F}(\tilde x)\bigr)\,dv_{\tilde{\sigma}_i^\varepsilon[2]}(\tilde x) \\
&=
\lim_{\varepsilon\to 0}
\int_{\tilde{\sigma}_i^\varepsilon(\Delta^4)[2]}
\Bigl(-\frac{1}{2\pi}K_{\tilde{\sigma}_i^\varepsilon[2]}\Bigr)\,dv_{\tilde{\sigma}_i^\varepsilon[2]}(\tilde x),
\end{aligned}
\]
where $F$ denotes the open $2$-face containing $\tilde x$; the first equality
follows from \eqref{eqn9}, and the second follows from
\eqref{eq:GB-2-simplex-1}. Here
$K_{\tilde{\sigma}_i^\varepsilon[2]}$ denotes the sectional curvature of the induced
metric on the $2$-face $\tilde{\sigma}_i^\varepsilon(\Delta^4)[2]$. Since each $4$-simplex $\tilde{\sigma}_i$ has $\binom{5}{3}=10$ two-dimensional faces, we obtain
\[
\lim_{\varepsilon\to 0}
\int_{\tilde{\sigma}_i^\varepsilon(\Delta^4)[2]}
\Bigl(-\frac{1}{2\pi}K_{\tilde{\sigma}_i^\varepsilon[2]}\Bigr)\,dv_{\tilde{\sigma}_i^\varepsilon[2]}(\tilde x)
\le
\frac{1}{2}\binom{5}{3}
=5,
\]
where the inequality follows from \eqref{eq:GB-2-simplex}, applied to each geodesic $2$-simplex.

Collecting the above estimates, we conclude that
\[
|\chi(M)|
\le
\left(\sum_{i=1}^N |a_i|\right)(1+5+5)
=
11\sum_{i=1}^N |a_i|.
\]
Taking the infimum over all real singular $4$-cycles representing $[M]$ whose simplices are projections of geodesic $4$-simplices in $\tilde M$, we obtain
\[
\|M\|
\ge
\frac{1}{11}|\chi(M)|.
\]
This completes the proof.
\end{proof}

\bibliographystyle{alpha}
\bibliography{simplicial}

\end{CJK}
\end{document}